\documentclass{amsart}

\usepackage[a4paper,hmargin=3.5cm,vmargin=4cm]{geometry}
\usepackage{amsfonts,amssymb,amscd,amstext}
\usepackage{graphicx}
\usepackage[dvips]{epsfig}
\usepackage{mathpazo}
\usepackage{enumerate}

\pretolerance=0

\newcommand{\dist}{\operatorname{dist}}

\def\div{\mathfrak{Div}}

\def\Hgot{\mathfrak{H}}


\headsep 0.5cm
\topmargin 0.25cm
\textheight = 52\baselineskip
\textwidth 15cm
\oddsidemargin 0.4cm
\evensidemargin 0.4cm


\def\r{\mathbb{R}}
\def\n{\mathbb{N}}
\def\c{\mathbb{C}}

\def\d{\mathbb{D}}

\def\z{\mathbb{Z}}
\def\p{\mathbb{P}}

\def\cp{\mathbb{CP}}


\def\Ncal{\mathcal{N}}

\def\Fcal{\mathcal{F}}
\def\Kcal{\mathcal{K}}
\def\Bcal{\mathcal{B}}
\def\Hcal{\mathcal{H}}
\def\Acal{\mathcal{A}}
\def\Pcal{\mathcal{P}}
\def\Ucal{\mathcal{U}}

\def\Qcal{\mathcal{Q}}
\def\Ical{\mathcal{I}}

\def\ftt{{\tt f}}
\def\ptt{{\tt p}}
\def\nsc{{\text{\sc n}}}

\setlength{\parskip}{0.5em}

\newtheorem{theorem}{Theorem}[section]

\newtheorem{claim}[theorem]{Claim}
\newtheorem{lemma}[theorem]{Lemma}
\newtheorem{corollary}[theorem]{Corollary}

\theoremstyle{definition}
\newtheorem{remark}[theorem]{Remark}
\newtheorem{definition}[theorem]{Definition}

\numberwithin{equation}{section}
\numberwithin{figure}{section}

\begin{document}

\title[Harmonic mappings  and minimal immersions in $\r^\nsc$]
{Harmonic mappings and conformal minimal immersions of Riemann surfaces into $\r^\nsc$}

\author[A.~Alarc\'{o}n]{Antonio Alarc\'{o}n$^\dagger$}
\address{Departamento de Matem\'{a}tica Aplicada \\ Universidad de Murcia
\\ E-30100 Espinardo, Murcia \\ Spain}
\email{ant.alarcon@um.es}

\author[I.~Fern\'{a}ndez]{Isabel Fern\'{a}ndez$^\ddagger$}
\address{Departamento de Matem\'{a}tica Aplicada I \\ Universidad de Sevilla
\\ E-41012 Sevilla \\ Spain}
\email{isafer@us.es}

\author[F.J.~L\'{o}pez]{Francisco J. L\'{o}pez$^\dagger$}
\address{Departamento de Geometr\'{\i}a y Topolog\'{\i}a \\
Universidad de Granada \\ E-18071 Granada \\ Spain}
\email{fjlopez@ugr.es}


\thanks{$^\dagger$ Research partially
supported by MCYT-FEDER research project MTM2007-61775 and Junta
de Andaluc\'{i}a Grant P09-FQM-5088}
\thanks{$^\ddagger$ Research partially
supported by MCYT-FEDER research project MTM2007-64504 and Junta
de Andaluc\'{i}a Grant P09-FQM-5088}

\subjclass[2010]{53C43; 53C42, 	30F15, 49Q05}
\keywords{Complete minimal surfaces, harmonic mappings of Riemann surfaces, Gauss map}

\begin{abstract}
We prove that for any open Riemann surface $\mathcal{N},$ natural number $\nsc \geq 3,$ non-constant harmonic map $h:\mathcal{N} \to \r^{\nsc-2}$ and holomorphic 2-form $\Hgot$ on $\Ncal,$ there exists a weakly complete harmonic map $X=(X_j)_{j=1,\ldots,\nsc}:\mathcal{N} \to \r^\nsc$ with Hopf differential $\Hgot$ and  $(X_j)_{j=3,\ldots,\nsc}=h.$ In particular, there exists a complete conformal minimal immersion $Y=(Y_j)_{j=1,\ldots,\nsc}:\mathcal{N} \to \r^\nsc$ such that $(Y_j)_{j=3,\ldots,\nsc}=h.$

As some consequences of these results:
\begin{itemize}
\item There exists complete full non-decomposable minimal surfaces with arbitrary conformal structure and whose generalized Gauss map is non-degenerate  and fails to intersect $\nsc$ hyperplanes of $\cp^{\nsc-1}$ in general position.
\item There exists complete non-proper embedded minimal surfaces in $\r^\nsc,$ $\forall\,\nsc>3.$
\end{itemize}
\end{abstract}

\maketitle

\thispagestyle{empty}


\section{Introduction}

In this paper we  use methods coming from the study of minimal surfaces to construct harmonic mappings of Riemann surfaces into $\r^\nsc$ with prescribed geometry.   A basic reference for this topic is, for instance,  Klotz's work \cite{tilla}.  

Our main result states  (see Corollary \ref{co:harmonic}):

\begin{quote} {\bf Theorem A.} {\em For any open Riemann surface $\mathcal{N},$ natural number $\nsc \geq 3,$ non-constant harmonic map $h:\mathcal{N} \to \r^{\nsc-2}$ and holomorphic 2-form $\Hgot$ on $\Ncal,$ there exists a weakly complete harmonic map $X=(X_j)_{j=1,\ldots,\nsc}:\mathcal{N} \to \r^\nsc$ with Hopf differential $\Hgot$ and   $(X_j)_{j=3,\ldots,\nsc}=h.$}
\end{quote}

Recall that the Hopf differential $Q_X$ of a harmonic map $X:\Ncal\to\r^\nsc$ is the holomorphic 2-form given by $Q_X:=\langle\partial_z X,\partial_z X\rangle,$ where $\partial_z$ means complex differential. By definition,  $X$ is said to be weakly complete if  $\Gamma_X:=|\partial_z X|^2$ is a complete conformal Riemannian metric in $\Ncal$ (see \cite{tilla}).

The fact that conformal minimal immersions are harmonic maps strongly influences the global theory of this kind of surfaces. It is well known that a harmonic immersion $X:\Ncal \to \r^\nsc$ is minimal if and only if it is conformal, or equivalently, $Q_X=0.$ Weakly completeness is equivalent to Riemannian completeness under minimality assumptions. The geometry of complete minimal surfaces in $\r^\nsc,$ specially those properties regarding the Gauss map, has been the object of extensive study over the last past decades (see for instance \cite{O1,CO,C,F3,R}). 

In the recent paper \cite{AFL}, the authors constructed complete  minimal surfaces in $\r^3$ with arbitrarily prescribed conformal structure and non-constant third coordinate function (see also \cite{AF}). As a consequence, any open Riemann surface admits a complete conformal minimal immersion in $\r^3$ whose Gauss map omits two antipodal points of the unit sphere. 
Theorem A let us generalize these results to  arbitrary higher dimensions (see Corollary \ref{co:minimal}):
\begin{quote}
{\bf Theorem B.} {\em  For any open Riemann surface $\mathcal{N},$ natural number $\nsc \geq 3$ and  non-constant harmonic map $h:\mathcal{N} \to \r^{\nsc-2},$ 
there exists a complete conformal minimal immersion $X=(X_j)_{j=1,\ldots,\nsc}:\mathcal{N} \to \r^\nsc$ with $(X_j)_{j=3,\ldots,\nsc}=h.$}
\end{quote}

Under some compatibility conditions depending on the map $h,$ the flux map of the immersion $X$ can be also prescribed. Recall that the flux map of a conformal minimal immersion $X:\mathcal{N} \to \r^\nsc$ is given by $\ptt_X(\gamma)=\mbox{Im}\int_{\gamma} \partial_z X$ for all $\gamma\in\Hcal_1(\Ncal,\z).$ In particular, if $h$ is the real part of a holomorphic map $H:\Ncal \to \c^{\nsc-2},$ Theorem B provides a complete null holomorphic curve $F=(F_j)_{j=1,\ldots,\nsc}:\Ncal \to \c^\nsc$  such that $(F_j)_{j=3,\ldots,\nsc}=H.$ Likewise, $Y=(F_j)_{j=2,\ldots,\nsc}:\Ncal \to \c^{\nsc-1}$ is a complete holomorphic immersion whose last $\nsc-2$ coordinates coincide with $H.$

Theorem B also includes some information about the Gauss map of minimal surfaces in $\r^{\nsc}.$ Given a conformal minimal immersion $X:\Ncal \to \r^\nsc,$ its generalized Gauss map $G_X:\Ncal \to \c \p^{\nsc-1},$ $G_X(P)=\partial_z X(P),$ is holomorphic and takes values on the complex hyperquadric $\{\sum_{j=1}^{\nsc} w_j^2=0\}.$ Chern and Osserman \cite{C,CO} showed that if $X$ is complete then either $X(\Ncal)$ is a plane or  $G_X(\Ncal)$ intersects a dense set of complex hyperplanes. Even more, Ru \cite{R} proved that if $X$ is complete and non-flat then $G_X$ cannot omit more than $\nsc(\nsc + 1)/2$ hyperplanes in $\cp^{\nsc-1}$ located in general position (see also the works of Fujimoto  \cite{F2,F3} for a good setting). Under the non-degeneracy assumption on $G_X,$ this upper bound is sharp for some values of $\nsc,$ see \cite{F4}. However, the number of exceptional hyperplanes  strongly depends on the underlying conformal structure of the surface. Indeed, Ahlfors  \cite{A} proved that any holomorphic map $G:\c\to\cp^{\nsc-1},$ $\nsc \geq 3,$ avoiding $\nsc+1$ hyperplanes of $\cp^{\nsc-1}$ in general position is degenerate, that is to say, $G(\c)$ lies in a proper projective subspace of $\cp^{\nsc-1}$ (see  \cite[Chapter 5, $\S$5]{W} and \cite{F} for further generalizations). So, it is natural to wonder whether any open Riemann surface admits a complete conformal minimal immersion in $\r^\nsc$ whose generalized Gauss map is non-degenerate and  omits $\nsc$ hyperplanes in general position. An affirmative answer to this question can be found in the following (see Corollary \ref{co:omite})

\begin{quote}
{\bf Theorem C.} {\em Let $\mathcal{N}$ be an open Riemann surface, and let $\ptt:\Hcal_1(\Ncal,\z)\to\r^\nsc$ be a group morphism, $\nsc\geq 3.$

Then there exists a complete conformal full non-decomposable minimal immersion $X:\Ncal\to\r^\nsc$ with $\ptt_X=\ptt$ and whose generalized Gauss map is non-degenerate and omits $\nsc$ hyperplanes in general position.}
\end{quote}

On the other hand, Theorem B leads to some interesting consequences regarding Calabi-Yau conjectures. The embedded Calabi-Yau  problem for minimal surfaces asks for the existence of complete bounded  embedded minimal surfaces in $\r^3.$  Complete embedded minimal surfaces in $\r^3$ with finite genus and countably many ends are proper in space \cite{MPR,CM}. However, this result fails to be true for arbitrary higher dimensions. For instance,  taking $\Ncal$ the unit disc $\d$ in $\c$ and  $h:\d \to \r^2$ the map $h(z)=(\rm{Re}(z),\rm{Im}(z)),$  Theorem B generates complete non-proper embedded minimal discs in $\r^4$  (so in $\r^\nsc$ for all $\nsc \geq 4$), see Corollary \ref{co:CM} for more details.

The paper is laid out as follows. In Section \ref{sec:preli} we introduce the necessary background and notations. In Section \ref{sec:approx} we prove a basic approximation result by holomorphic 1-forms in open Riemann surfaces (Lemma \ref{lem:ap}), which is the key tool for proving our main results.  Finally, in Section \ref{sec:main} we state and prove Theorems A, B and C. It is worth mentioning that all these theorems  actually follows from the more general result Theorem \ref{th:main} in Section \ref{sec:main}.


\section*{Acknowledgments}
The authors would like to thank Jos\'{e} A. G\'{a}lvez for directing their attention to the paper \cite{tilla}.

\section{Preliminaries} \label{sec:preli}

Given a topological surface $M,$  $\partial M$ will denote the one dimensional topological manifold determined by the boundary points of $M.$ Given $S \subset M,$ call by $S^\circ$ and $\overline{S}$  the interior and the closure  of $S$  in $M,$ respectively. Open connected subsets of $M-\partial M$ will be called {\em domains}, and those proper topological subspaces of $M$ being surfaces with boundary are said to be  {\em regions}. The surface $M$ is said to be {\em open} if it is non-compact and $\partial M =\emptyset.$

If $M$ is a Riemann surface, $\partial_z$ will denote the global complex operator given by $\partial_z|_U=\frac{\partial}{\partial w} dw$ for any conformal chart $(U,w)$ on $M.$

\begin{remark}
Throughout this paper $\Ncal$ and $\nsc$ will denote a fixed but arbitrary open Riemann surface and natural number greater than or equal to three, respectively.
\end{remark}

Let $S$ denote a subset of $\mathcal{ N},$ $S \neq \mathcal{ N}.$ We denote by  ${\mathcal{ F}_0}(S)$ as the space of continuous functions $f:S \to{\c}$ which are holomorphic on an open neighborhood  of $S$ in $\mathcal{ N}.$ Likewise, $\mathcal{ F}_0^*(S)$  will denote the space of continuous functions $f:S \to \c$ being holomorphic on $S^\circ.$ 

As usual, a 1-form $\theta$ on $S$ is said to be of type $(1,0)$ if for any conformal chart $(U,z)$ in $ \mathcal{ N},$ $\theta|_{U \cap S}=h(z) dz$ for some function $h:U \cap S \to \c.$  We denote by $\Omega_0(S)$ as the space of holomorphic 1-forms on an open neighborhood of $S$ in $\mathcal{ N}.$   We call $\Omega_0^*(S)$ as the space of 1-forms $\theta$ of type $(1,0)$ on $S$ such that $(\theta|_{U})/dz \in \mathcal{ F}_0^*(S \cap U)$ for any conformal chart $(U,z)$ on $\mathcal{ N}.$

We denote by $\mho_0(S)$ as the space of holomorphic 2-forms on an open neigborhood of $S$ in $\Ncal.$

Let $\div(S)$ denote the free commutative group of divisors of $S$ with multiplicative notation.
A divisor $D \in \div(S)$ is said to be {\em integral} if $D=\prod_{i=1}^n Q_i^{n_i}$ and $n_i\geq 0$ for all $i.$ Given $D_1,$ $D_2 \in \div(S),$ we write $D_1 \geq D_2$ if and only if $D_1 D_2^{-1}$ is  integral. For any $f \in \mathcal{ F}_0(S)$ we denote by $(f)$ its associated integral divisor of zeros in $S.$ Likewise we define $(\theta)$  for any  $\theta \in \Omega_0(S).$

In the sequel we will assume that $S$ is a {\em compact subset} of $\Ncal.$

A compact Jordan arc in $\mathcal{ N}$ is said to be analytical if it is contained in an open analytical Jordan arc in $\mathcal{ N}.$ 
By definition, a connected component $V$ of $\mathcal{ N}-S$ is said to be {\em bounded}  if $\overline{V}$ is compact, where $\overline{V}$ is the closure of $V$ in $\mathcal{ N}.$ Moreover, a subset $K\subset\Ncal$ is said to be {\em Runge} (in $\Ncal$) if $\Ncal-K$ has no bounded components.
\begin{definition}
A compact subset $S\subset \mathcal{ N}$ is said to be {\em admissible} if and only if (see Figure \ref{fig:admi}):
\begin{itemize}
\item $S$ is Runge, 
\item $M_S:=\overline{S^\circ}$  consists of a finite
collection of pairwise disjoint compact regions in $\mathcal{ N}$
with $\mathcal{ C}^0$ boundary, \item $C_S:=\overline{S-M_S}$
consists of a finite collection of pairwise disjoint analytical
Jordan arcs, and \item any component $\alpha$ of $C_S$  with an
endpoint $P\in M_S$ admits an analytical extension $\beta$ in
$\mathcal{ N}$ such that the unique component of $\beta-\alpha$
with endpoint $P$ lies in $M_S.$
\end{itemize}
\end{definition}

\begin{figure}[ht]
    \begin{center}
    \scalebox{0.35}{\includegraphics{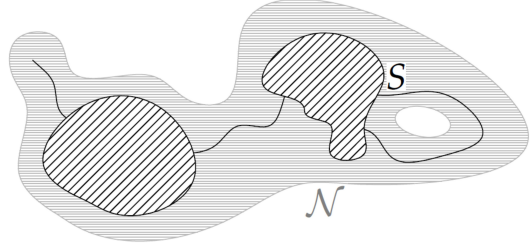}}
        \end{center}
\caption{An admissible subset.}\label{fig:admi}
\end{figure}

Let $W$ be a domain  in $\Ncal$, and let $S$ be either a compact region or an admissible subset in $\Ncal.$ $W$ is said to be a {\em tubular neighborhood} of $S$ if $S \subset W$ and  $W-S$ consists of a finite collection of pairwise disjoint open annuli. In addition, if $\overline{W}$ is a compact region isotopic to $W$ then $\overline{W}$ is said to be a {\em compact tubular neighborhood} of $S.$  Here {\em isotopic} means that $\jmath_*:\Hcal_1(W,\z) \to \Hcal_1(\overline{W},\z)$ is an isomorphism, where $\jmath:W \to \overline{W}$ is the inclusion map.

Let $W\subset\Ncal$ be a domain with $S\subset W.$
We shall say that a function  $f \in \mathcal{ F}_0^*(S)$ can be {\em uniformly approximated} on $S$ by functions in ${\mathcal{ F}_0}(W)$ if there exists  $\{f_n\}_{n \in \n} \subset {\mathcal{ F}_0}(W)$ such that $\{|f_n-f|\}_{n \in \n} \to 0$ uniformly on $S.$     A 1-form  $\theta \in \Omega_0^*(S)$ can be uniformly approximated on $S$ by 1-forms  in $\Omega_0(W)$ if there exists   $\{\theta_n\}_{n \in \n}\subset\Omega_0(W)$ such that $\{\frac{\theta_n-\theta}{dz}\}_{n \in \n} \to 0$ uniformly on $S \cap U,$ for any conformal closed disc $(U,dz)$ on $W.$

Given an admissible compact set $S\subset W,$ a function $f:S\to\c^n,$ $n\in\n,$ is said to be smooth if $f|_{M_S}$  admits a smooth extension $f_0$ to an open domain $V$ in $W$ containing $M_S,$ and for any component $\alpha$ of $C_S$ and any open analytical Jordan arc $\beta$ in $W$ containing $\alpha,$  $f$ admits an smooth extension $f_\beta$ to $\beta$ satisfying that $f_\beta|_{V \cap \beta}=f_0|_{V \cap \beta}.$ Likewise, an 1-form $\theta$ of type $(1,0)$ on $S$ is said to be smooth if for any closed conformal disc $(U,z)$ on $\Ncal$ such that $S \cap U$ is admissible, the function $\frac{\theta}{dz}$ is smooth on $S\cap U.$ Given a smooth $f\in\Fcal_0^*(S),$ we set $df \in \Omega_0^*(S)$ as the smooth 1-form given by $df|_{M_S}=d (f|_{M_S})$ and $df|_{\alpha \cap U}=(f \circ \alpha)'(x)dz|_{\alpha \cap U},$ where $(U,z=x+i y)$ is a conformal chart on $W$ such that $\alpha \cap U=z^{-1}(\r \cap z(U)).$ Obviously, $df|_\alpha(t)= (f\circ\alpha)'(t) dt$ for any component $\alpha$ of $C_S,$ where $t$ is any smooth parameter along $\alpha.$ This definition makes sense also for smooth functions with poles in $S^\circ.$

A smooth 1-form $\theta \in \Omega_0^*(S)$ is said to be {\em exact} if $\theta=df$ for some smooth $f \in \mathcal{ F}_0^* (S),$ or equivalently if $\int_\gamma \theta=0$ for all $\gamma \in \mathcal{ H}_1(S,\z).$


\subsection{Harmonic maps and minimal surfaces in $\r^\nsc$}\label{sec:minimal}

Given a non-constant harmonic map $X=(X_j)_{j=1,\ldots,\nsc}:\Ncal\to \r^\nsc,$ the holomorphic quadratic differential 
\[
Q_X:=\langle\partial_z X,\partial_z X\rangle=\sum_{j=1}^\nsc (\partial_z X_j)^2
\]
is said to be the Hopf differential of $X.$ We also consider the conformal metric, possibly with isolated singularities,
\[
\Gamma_X:=\frac{1}{2}\sum_{j=1}^\nsc |\partial_z X_j|^2.
\]
It is clear that $2\Gamma_X$ is greater than or equal to the  Riemannian metric on $\Ncal$ (possibly with singularities) induced by $X.$ When $X$ is an immersion then $\Gamma_X$ is a Riemannian metric, and if in addition $X$ is complete then  $\Gamma_X$ is complete as well \cite{tilla}. However, the reciprocal does not hold in general.
\begin{definition}\label{def:tilla}
We say that a harmonic map $X:\Ncal\to\r^\nsc$ is {\em weakly complete} (or complete  in the sense of \cite{tilla}) if $\Gamma_X$ is a complete metric in $\Ncal.$
\end{definition}

We also associate to $X$ the group morphism
\[
\ptt_X:\Hcal_1(\Ncal,\z)\to\r^\nsc,\quad \ptt_X(\gamma)={\rm Im}\int_\gamma \partial_z X.
\]

\begin{remark}\label{rem:Q=0}
If $Q_X= 0$ and $\Gamma_X$ never vanishes, then $X$ is a conformal minimal immersion, $\Gamma_X$ is the metric induced on $\Ncal$ by $X,$ and $\ptt_X$ is the flux map of $X.$
\end{remark}

If in addition $X$ is a conformal minimal immersion and we write $\partial_z X_j=f_j d\zeta$ in terms of a local parameter $\zeta$ on $\Ncal,$ $j=1,\ldots,\nsc,$ then the {\em (generalized) Gauss map} of $X$ is given by
\[
G_X:\Ncal\to \cp^{\nsc-1},\quad G_X(\zeta)=[(f_j(\zeta))_{j=1,\ldots,\nsc}],
\]
where $[w]$ is the class of $w$ in $\cp^{\nsc-1},$ $\forall w\in \c^\nsc.$ It is well known that $G_X$ is a holomorphic map taking values in the complex quadric $\{[(w_j)_{j=1,\ldots,\nsc}]\in\cp^{\nsc-1}\;|\; \sum_{j=1}^\nsc w_j^2=0\}.$ 

A set of hyperplanes in $\cp^{\nsc-1}$ is said to be in general position if each subset of $k$ hyperplanes, with $k \leq \nsc-1,$ has an
$(\nsc-1-k)$-dimensional intersection.

\begin{definition}[\cite{O}]  
Let $X:\Ncal \to \r^{\nsc}$ be a conformal minimal immersion.
\begin{itemize}
\item $X$ is said to be {\em decomposable} if, with respect to suitable rectangular coordinates in $\r^{\nsc},$ one has $\sum_{k=1}^m (\partial_z X_k)^2= 0$ for some $m<\nsc.$ 
\item $X$ is said to be {\em full} if $X(\Ncal)$ is contained in no hyperplane of $\r^\nsc.$ 
\item The Gauss map $G_X$ is said to be {\em degenerate} if $G_X(\Ncal)$ lies in a hyperplane of $\cp^{\nsc-1}.$
\end{itemize} 
\end{definition}

When $\nsc=3,$ decomposable, non-full and degenerate are equivalent. However, if one passes to higher dimensions then no two of these conditions are equivalent  (see \cite{O}).


\section{The Approximation Lemma} \label{sec:approx}

The next two lemmas are the key tools in the proof of the main result of this section (Lemma \ref{lem:ap}). They represent a slight generalization of Lemmas 2.4 and 2.5 in \cite{AL}.

From now on, $\imath$ denotes the imaginary unit and the symbol $\not\equiv 0$  means {\em non-identically zero}.

\begin{lemma}\label{lem:funaprox}
Let $W\subset \Ncal$ be a domain with finite topology and $S\subset\Ncal$ an admissible subset with $S\subset W.$
Consider $f\in \Fcal_0^*(S)\cap \Fcal_0(M_S)$ with $f|_{M_S}\not\equiv 0.$

Then $f$ can be uniformly approximated on $S$ by functions $\{f_n\}_{n \in \n}$ in $\Fcal_0(W)$ satisfying that $(f_n)=(f|_{M_S})$ on $W.$  In particular, $f_n$ never vanishes on $W-M_S$ for all $n.$
\end{lemma}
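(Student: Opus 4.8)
The plan is to reduce the statement to a classical Runge-type approximation theorem for holomorphic functions on open Riemann surfaces, the extra content being the control of the divisor of zeros. First I would pass from the admissible set $S$ to a genuine open neighborhood: since $S$ is admissible, $M_S=\overline{S^\circ}$ is a finite union of compact bordered regions and $C_S=\overline{S-M_S}$ is a finite union of analytic arcs attached to $M_S$, and by the last condition in the definition of admissibility each such arc extends analytically into $W$. One can then thicken $S$ slightly to a compact region $\Omega$ with $S\subset\Omega^\circ\subset\Omega\subset W$, isotopic to $S$ (so $\Omega$ is again Runge in $\Ncal$, hence in $W$), and such that $f$, being in $\Fcal_0^*(S)\cap\Fcal_0(M_S)$, is already holomorphic on a neighborhood of $M_S$; extending $f$ across the arcs $C_S$ using their analytic continuations, $f$ becomes holomorphic on a neighborhood of a set only slightly larger than $S$, with the same (finite) zero divisor $(f|_{M_S})$, because $f|_{M_S}\not\equiv 0$ forces the continuation to be non-identically zero and its only zeros near $S$ are the finitely many prescribed ones.

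Next I would handle the divisor. Write $(f|_{M_S})=\prod_{i=1}^m P_i^{k_i}$ (a finite integral divisor supported in $M_S^\circ$). Choose, once and for all, a holomorphic function $g$ on a neighborhood of $\overline{W}$ in $\Ncal$, or a meromorphic one on $W$ with controlled poles, having exactly this divisor of zeros on a neighborhood of $S$; concretely one can take local uniformizers near each $P_i$ and patch, or simply use the classical fact that on an open Riemann surface every finite integral divisor is the divisor of a holomorphic function. Then $f/g$ is holomorphic and nowhere zero on a neighborhood of $S$ — here the hypothesis $f|_{M_S}\not\equiv 0$ is exactly what makes $f/g$ well defined and zero-free — so on each component of that neighborhood $f/g$ has a holomorphic logarithm, i.e. $f/g=e^{\varphi}$ for some $\varphi$ holomorphic near $S$.

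Now apply the Runge approximation theorem for holomorphic functions on the open Riemann surface $W$ (valid because $S$, equivalently $\Omega$, is Runge in $W$): approximate $\varphi$ uniformly on $S$ by functions $\varphi_n\in\Fcal_0(W)$. Set $f_n:=g\,e^{\varphi_n}$. Then $f_n\in\Fcal_0(W)$ (adjusting $g$ if necessary so that it is genuinely holomorphic on all of $W$ with divisor $(f|_{M_S})$, which is possible since this divisor is integral and finite), and $e^{\varphi_n}$ is nowhere zero on $W$, so $(f_n)=(g)=(f|_{M_S})$ on $W$ and $f_n$ never vanishes on $W-M_S$. Finally, $|f_n-f|=|g|\,|e^{\varphi_n}-e^{\varphi}|\le |g|\,|e^{\varphi}|\,|e^{\varphi_n-\varphi}-1|$ on $S$, and since $g$ and $e^{\varphi}$ are bounded on the compact set $S$ while $\varphi_n-\varphi\to 0$ uniformly there, the right-hand side tends to $0$ uniformly on $S$.

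The main obstacle I expect is the bookkeeping at the start: making precise the passage from the admissible set $S$ (which has the one-dimensional "whiskers" $C_S$) to an honest open neighborhood on which $f$ is holomorphic and to a Runge compact region $\Omega$ inside $W$, while keeping the zero divisor exactly equal to $(f|_{M_S})$ and using the analytic-extension clause of admissibility to continue $f$ past the arcs without creating spurious zeros. Once $f$ is realized as $g\,e^{\varphi}$ near $S$ with $g$ carrying the divisor and $\varphi$ holomorphic, the remainder is a routine application of Runge's theorem on $W$ together with the elementary estimate above; this is also where the cited Lemmas 2.4 and 2.5 of \cite{AL} do the analogous work, so I would lean on that structure.
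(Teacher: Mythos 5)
Your reduction breaks down at the very first step, and the failure is not just bookkeeping. A function $f\in\Fcal_0^*(S)\cap\Fcal_0(M_S)$ is holomorphic only on a neighborhood of $M_S$; on the arcs $C_S$ it is merely \emph{continuous} (the space $\Fcal_0^*(S)$ consists of continuous functions on $S$ that are holomorphic on $S^\circ$, and the arcs have empty interior). The analytic-extension clause in the definition of admissibility is a statement about the arcs as subsets of $\Ncal$, not about $f$: there is no analytic continuation of $f$ across $C_S$, so you cannot thicken $S$ to a compact region $\Omega$ on which $f$ is holomorphic. Consequently the problem is of Mergelyan/Carleman type (approximating continuous boundary-arc data by globally holomorphic functions), not of Runge type, and the later appeal to ``the Runge approximation theorem'' to approximate $\varphi$ is appealing to a theorem that does not apply to the data you actually have; in effect it assumes the kind of result the lemma (via \cite{AL}, Lemma 2.4) is designed to provide.

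The factorization $f=g\,e^{\varphi}$ has two further genuine problems even if one ignores the above. First, only $f|_{M_S}\not\equiv 0$ is assumed, so $f$ may vanish at points of $C_S$; there $f/g$ vanishes too (the divisor $(f|_{M_S})$ is supported in $M_S$), so no logarithm of $f/g$ exists near such points, yet the approximants are required to be zero-free on $W-M_S$. Second, even on a neighborhood of $M_S$, where $f/g$ is holomorphic and zero-free, a global holomorphic logarithm need not exist when the components of $M_S$ are not simply connected (e.g.\ an annular component around which $f/g$ has nontrivial winding), so ``$f/g=e^{\varphi}$ near $S$'' is unjustified. The paper avoids all of this by a different route: it shrinks tubular neighborhoods $M_k\supset M_S$ on which $f$ extends holomorphically with no extra zeros, replaces $f$ along the arc portions $C_{S_k}=C_S-M_k^\circ$ by nearby continuous data $g_k$ that agree with $f$ on $M_k$ and are nonvanishing on the arcs, applies the Mergelyan-type Lemma 2.4 of \cite{AL} (which already carries the divisor control) to each $g_k$, and concludes by a diagonal argument. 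If you want to salvage your multiplicative scheme, you would have to first perform a perturbation of $f$ along $C_S$ of exactly this kind and then invoke an approximation theorem valid for admissible sets rather than plain Runge.
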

\begin{proof}
Let $\{M_k\}_{k\in\n}$ be a sequence of compact tubular neighborhoods of $M_S$ in $W$ such that $M_k \subset M_{k-1}^\circ$ for any $k,$ $\cap_{k \in  \n} M_k=M_S$ and $f$ holomorphically extends  (with the same name) to $M_1$ and has no zeros on $M_1-M_S$ (take into account that $f|_{M_S}\not\equiv 0$). Choose $M_k$ so that, in addition, the compact set $S_k:=M_k \cup C_S\subset W$ is admissible and $\alpha-M_k^\circ$ is a (non-empty) Jordan arc for any component $\alpha$ of $C_S.$ In particular, $M_{S_k}=M_k$ and $C_{S_k}= C_S-M_k^\circ,$ $k \in \n.$

For any $k\in\n$ take $g_k\in \Fcal_0^*(S_k)\cap \Fcal_0(M_{S_k})$ satisfying
\begin{itemize}
\item $g_k|_{M_{S_k}}=f|_{M_{S_k}},$ 
\item $g_k$ never vanishes on $S_k-S_k^\circ$ (recall that $f$ has no zeros on $M_1-M_S$), and 
\item the sequence $\{g_k|_S\}_{k \in \n}$ uniformly converges to $f$ on $S.$
\end{itemize}

The construction of such functions is standard, we omit the details. Since $g_k$ satisfies the hypotheses of Lemma 2.4 in \cite{AL}, it can be uniformly approximated on $S_k$ by a sequence $\{g_{k,n}\}_{n \in \n}\subset\Fcal_0(W)$ with $(g_{k,n})=(g_k|_{M_{S_k}})=(f|_{M_S})$ on $W,$ for any $k.$ A standard diagonal argument concludes the proof.
\end{proof}

\begin{lemma}\label{lem:formaprox}
Let $W\subset \Ncal$ be a domain with finite topology and $S\subset\Ncal$ an admissible subset with $S\subset W.$
Consider $\theta \in \Omega_0^*(S)\cap \Omega_0(M_S)$ with $\theta|_{M_S}\not\equiv 0.$

Then  $\theta$ can be uniformly approximated on $S$ by 1-forms $\{\theta_n\}_{n \in \n}$ in $\Omega_0(W)$ satisfying that $(\theta_n)=(\theta|_{M_S})$ on $W.$  In particular, $\theta_n$ never vanishes on $W-M_S$ for all $n.$
\end{lemma}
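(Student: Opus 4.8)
The plan is to reduce this $1$-form statement to the function statement already proved in Lemma \ref{lem:funaprox}, by dividing out a fixed zero-free holomorphic $1$-form on $W$.

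First I would produce a holomorphic $1$-form $\omega_0\in\Omega_0(W)$ without zeros. Since $W$ is an open Riemann surface (a domain of finite topology in $\Ncal$), by the Gunning--Narasimhan theorem it carries a holomorphic function with no critical points, and its differential $\omega_0$ is nowhere vanishing on $W$; for $W$ of finite topology this can also be arranged by more elementary means. Next, set $f:=\theta/\omega_0$. Because $\omega_0$ is holomorphic and zero-free, division by it maps $\Omega_0(M_S)$ into $\Fcal_0(M_S)$ and $\Omega_0^*(S)$ into $\Fcal_0^*(S)$: in any conformal chart $(U,z)$ one has $f|_{S\cap U}=(\theta/dz)\big/(\omega_0/dz)$, a quotient of a function in $\Fcal_0^*(S\cap U)$ by a zero-free holomorphic function, hence in $\Fcal_0^*(S\cap U)$; on a neighbourhood of $M_S$ the same quotient is holomorphic. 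Moreover $f|_{M_S}\not\equiv 0$ since $\theta|_{M_S}\not\equiv 0$, and $(f|_{M_S})=(\theta|_{M_S})$ as divisors on $W$, because $\omega_0$ contributes no zeros. Thus $f$ satisfies the hypotheses of Lemma \ref{lem:funaprox}.

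Then I would apply Lemma \ref{lem:funaprox} to obtain $\{f_n\}_{n\in\n}\subset\Fcal_0(W)$ with $(f_n)=(f|_{M_S})$ on $W$ and $f_n\to f$ uniformly on $S$, and put $\theta_n:=f_n\,\omega_0\in\Omega_0(W)$. Since $\omega_0$ has no zeros, $(\theta_n)=(f_n)=(f|_{M_S})=(\theta|_{M_S})$ on $W$; in particular $\theta_n$ is zero-free on $W-M_S$. For the approximation, fix a closed conformal disc $(U,z)$ on $W$; then on $S\cap U$ one has $\tfrac{\theta_n-\theta}{dz}=(f_n-f)\cdot\tfrac{\omega_0}{dz}$, and as $\omega_0/dz$ is bounded on the compact set $S\cap U$ while $f_n\to f$ uniformly on $S$, the left-hand side tends to $0$ uniformly on $S\cap U$. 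This is exactly uniform approximation of $\theta$ on $S$ by the $\theta_n$, completing the proof.

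The only non-formal ingredient is the existence of the zero-free $\omega_0$ on $W$; everything else is bookkeeping transporting Lemma \ref{lem:funaprox} through multiplication by $\omega_0$, so I expect no real obstacle. If one prefers to avoid invoking Gunning--Narasimhan, an alternative is to repeat the proof of Lemma \ref{lem:funaprox} \emph{mutatis mutandis} at the level of $1$-forms, using Lemma 2.5 of \cite{AL} in place of Lemma 2.4: the structure (a sequence of compact tubular neighbourhoods $M_k$ of $M_S$, interpolating $1$-forms $\theta_k\in\Omega_0^*(S_k)\cap\Omega_0(M_{S_k})$ that are zero-free near $\partial$, and a diagonal argument) carries over verbatim.
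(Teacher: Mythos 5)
Your proof is correct and coincides with the paper's own argument: the authors likewise take a never-vanishing $\vartheta\in\Omega_0(W)$, set $f:=\theta/\vartheta$, apply Lemma \ref{lem:funaprox}, and define $\theta_n:=f_n\vartheta$. Your additional remarks on the existence of the zero-free $1$-form and the divisor bookkeeping simply make explicit what the paper leaves implicit.
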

\begin{proof}
Let $\vartheta$ be a never vanishing 1-form in $\Omega_0(W).$ Define $f:=\theta/\vartheta
\in \Fcal_0^*(S)\cap \Fcal_0(M_S).$ By Lemma \ref{lem:funaprox}, $f$ can be
uniformly approximated on $S$ by a sequence
$\{f_n\}_{n \in \n}$ in $\Fcal_0(W)$ satisfying that $(f_n)=(f|_{M_S})$
on $W$ for all $n.$ It suffices to set $\theta_n:=f_n \vartheta,$ $n
\in \n.$
\end{proof}


\begin{lemma}\label{lem:ap}
Let $W\subset \Ncal$ be a domain with finite topology and $S\subset\Ncal$ an admissible subset with $S\subset W.$ Let $\Theta\in\mho_0(W)$ and $\Phi=(\phi_1,\phi_2)$ be a smooth pair in $\Omega_0^*(S)^2\cap \Omega_0(M_S)^2$ satisfying
$
\phi_1^2+\phi_2^2=\Theta|_{S}
$
and either of the following conditions:
\begin{enumerate}[{\rm (A)}]
\item $\phi_1|_{M_S}$ and $\phi_2|_{M_S}$ are linearly independent in $\Omega_0(M_S)$ and $\Theta$ has no zeros on $C_S.$
\item $\Theta= 0$ and $\phi_1|_{M_S}\not\equiv 0.$
\end{enumerate}

Then $\Phi$ can be uniformly approximated on $S$ by a sequence $\{\Phi_n=(\phi_{1,n},\phi_{2,n})\}_{n\in\n}\subset \Omega_0(W)^2$ satisfying
\begin{enumerate}[{\rm (a)}]
\item $\phi_{1,n}^2+\phi_{2,n}^2=\Theta,$
\item $\Phi_n-\Phi$ is exact on $S,$ and
\item the zeros of $\Phi_n$ on $W$ are those of $\Phi$ on $M_S$ (in particular, $\Phi_n$ never vanishes on $W-M_S$).
\end{enumerate}
\end{lemma}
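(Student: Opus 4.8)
The statement asks us to approximate a pair $\Phi=(\phi_1,\phi_2)$ on an admissible set $S$ by holomorphic pairs on $W$ keeping the quadratic relation $\phi_{1,n}^2+\phi_{2,n}^2=\Theta$, keeping exactness of the difference, and controlling the zeros. The natural device is to pass from the ``quadric'' $\{\phi_1^2+\phi_2^2=\Theta\}$ to a rational parametrization, approximate the parametrizing data using the earlier lemmas (Lemma \ref{lem:funaprox} and Lemma \ref{lem:formaprox}), and then push forward. Concretely, write $\phi_1^2+\phi_2^2=(\phi_1+\imath\phi_2)(\phi_1-\imath\phi_2)$. So set $\mu:=\phi_1+\imath\phi_2$ and $\nu:=\phi_1-\imath\phi_2$; then $\mu\nu=\Theta|_S$ and $\phi_1=\tfrac12(\mu+\nu)$, $\phi_2=\tfrac1{2\imath}(\mu-\nu)$. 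Approximating $\Phi$ amounts to approximating the pair $(\mu,\nu)$ subject to $\mu\nu=\Theta$. The two hypotheses (A) and (B) correspond exactly to the two behaviors of this factorization: in case (A), both $\mu|_{M_S}$ and $\nu|_{M_S}$ are nonzero (indeed linear independence of $\phi_1,\phi_2$ forces $\mu\not\equiv 0\not\equiv\nu$, and $\Theta$ having no zeros on $C_S$ means neither factor vanishes on $C_S$); in case (B), $\Theta=0$ so one of the factors, say $\nu$, can be taken identically zero on a component while $\mu$ carries everything — here $\phi_1|_{M_S}\not\equiv0$ is what keeps $\mu\not\equiv0$.

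\textbf{Case (A).}
Fix a never-vanishing $\vartheta\in\Omega_0(W)$, and write $\mu=u\,\vartheta$ with $u:=\mu/\vartheta\in\Fcal_0^*(S)\cap\Fcal_0(M_S)$, $u|_{M_S}\not\equiv0$. By Lemma \ref{lem:funaprox} we may approximate $u$ on $S$ by $\{u_n\}\subset\Fcal_0(W)$ with $(u_n)=(u|_{M_S})$ on $W$; set $\mu_n:=u_n\vartheta$. Since $\Theta/\mu_n$ is holomorphic on $W$ away from the (finitely many) zeros of $u_n$, which all lie in $M_S$ and coincide with zeros of $\mu|_{M_S}$; and since $\Theta|_{M_S}=\mu|_{M_S}\nu|_{M_S}$ has zeros at least of the order of those of $\mu|_{M_S}$ there, the quotient $\nu_n:=\Theta/\mu_n$ extends holomorphically across $M_S$ to all of $W$ for $n$ large. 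Then automatically $\mu_n\nu_n=\Theta$, hence $\phi_{1,n}:=\tfrac12(\mu_n+\nu_n)$, $\phi_{2,n}:=\tfrac1{2\imath}(\mu_n-\nu_n)$ satisfy (a). Uniform convergence $\mu_n\to\mu$ on $S$ gives $\nu_n\to\nu=\Theta/\mu$ uniformly on $S$ (using that $\mu$ is bounded away from $0$ on $C_S$ and has controlled zeros on $M_S$), so $\Phi_n\to\Phi$. For the zero set: $\Phi_n$ vanishes at $P$ iff $\mu_n(P)=\nu_n(P)=0$; but $\mu_n\nu_n=\Theta$ has no zeros on $C_S$ and $\nu_n=\Theta/\mu_n$ is nonzero wherever $\mu_n$ is, so the common zeros are exactly the common zeros of $\mu|_{M_S}$ and $\nu|_{M_S}$, i.e.\ the zeros of $\Phi$ on $M_S$, giving (c). Exactness (b): $\int_\gamma(\Phi_n-\Phi)$ is small for $\gamma$ in a finite homology basis of $S$, and being an integral of holomorphic forms over cycles it is locally constant in $n$; applying the approximation with enough precision and then a standard correction of the periods (the period map is open onto a neighborhood, or one adds a controlled exact form) forces the difference to be exactly exact. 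A diagonal argument assembles the sequence.

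\textbf{Case (B).}
Here $\Theta=0$, so we want $\mu_n\nu_n=0$ identically on $W$; take $\nu_n\equiv0$, i.e.\ $\phi_{1,n}=\phi_{2,n}/\imath$ up to sign — more precisely just approximate $\mu=\phi_1+\imath\phi_2$ directly. Since $\Theta=0$ means $\phi_2=\pm\imath\phi_1$ on each component, so $\Phi$ is determined by $\phi_1$, and $\phi_1\in\Omega_0^*(S)\cap\Omega_0(M_S)$ with $\phi_1|_{M_S}\not\equiv0$; apply Lemma \ref{lem:formaprox} to approximate $\phi_1$ by $\{\phi_{1,n}\}\subset\Omega_0(W)$ with $(\phi_{1,n})=(\phi_1|_{M_S})$, set $\phi_{2,n}:=\epsilon\,\imath\,\phi_{1,n}$ with the appropriate sign $\epsilon\in\{\pm1\}$ on the (single relevant) component, and correct periods as before. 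Then (a) reads $\phi_{1,n}^2-\phi_{1,n}^2=0=\Theta$, (c) follows since $\Phi_n$ and $\phi_{1,n}$ have the same zeros, and (b) is the period correction.

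\textbf{Main obstacle.}
The delicate point is not the factorization trick but checking that in case (A) the quotient $\nu_n=\Theta/\mu_n$ really is holomorphic on all of $W$ — this needs the zero divisor of $\mu_n$ (which we have pinned to $(\mu|_{M_S})$) to be dominated by that of $\Theta$ on $W$, which is exactly where the hypothesis ``$\Theta$ has no zeros on $C_S$'' together with the relation $\phi_1^2+\phi_2^2=\Theta$ on $S$ and the linear independence of $\phi_1|_{M_S},\phi_2|_{M_S}$ enter. One must verify on $M_S$ that $(\Theta)\ge(\mu|_{M_S})$ as divisors, i.e.\ that $\nu|_{M_S}=\Theta|_{M_S}/\mu|_{M_S}$ is already holomorphic there (true because $\nu|_{M_S}=\phi_1|_{M_S}-\imath\phi_2|_{M_S}\in\Omega_0(M_S)$); then the approximation with matched divisors makes $\nu_n$ holomorphic on $W$. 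The other mildly technical point, common to both cases, is turning approximate exactness of $\Phi_n-\Phi$ into exact exactness, handled by the standard period-correction argument since the space of holomorphic forms on $W$ surjects (via integration over a homology basis) onto a full-dimensional lattice-complement in $\c^{\dim}$ and small corrections stay within the allowed divisor and quadric constraints.
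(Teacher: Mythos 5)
Your factorization $\mu=\phi_1+\imath\phi_2$, $\nu=\phi_1-\imath\phi_2$ with $\mu\nu=\Theta$ is exactly the paper's starting point (there $\eta=\phi_1-\imath\phi_2$, $\Theta/\eta=\phi_1+\imath\phi_2$), and your treatment of (a) and (c) — approximate one factor with matched zero divisor via Lemma~\ref{lem:funaprox}/\ref{lem:formaprox} and divide $\Theta$ by it — is the same mechanism the paper uses. But there is a genuine gap at condition (b), which is the heart of the lemma, and the fix you sketch does not work. ``Adding a controlled exact form'' changes no periods at all, so it cannot repair the period discrepancy; adding a small non-exact holomorphic form with the right periods destroys the constraints you have just secured: in case (A) it takes the pair off the quadric $\phi_{1,n}^2+\phi_{2,n}^2=\Theta$, and in both cases it can create zeros of $\Phi_n$ on the \emph{noncompact} set $W-M_S$, where smallness on compacta gives no control (condition (c) forbids zeros on all of $W-M_S$, not just near $S$). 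Your closing claim that ``small corrections stay within the allowed divisor and quadric constraints'' is precisely what fails for additive corrections.

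The paper's way around this is multiplicative: it corrects $\theta_n$ (the divisor-matched approximation of $\eta$) to $\eta_n=e^{\sum_i\alpha_{i,n}f_i}\,\theta_n$ with $f_i\in\Fcal_0(W)$, which never alters the zero divisor and automatically keeps $\eta_n\cdot(\Theta/\eta_n)=\Theta$, and then solves for the coefficients $\alpha_n$ by an implicit-function argument applied to the period map $\Pcal_n$. The nontrivial step — entirely absent from your proposal and dismissed there as ``mildly technical'' — is proving that the Fr\'echet derivative of this period map at $0$, restricted to $\Fcal_0(W)$, is surjective onto $\c^{2\nu}$ (Claim~\ref{cla:sobre}). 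That proof is a genuine duality argument: one assumes the image lies in a hyperplane, integrates by parts using $\xi=\phi_1/\phi_2$, tests against functions with divisor $\geq(\phi_2|_{M_S})^2$, and produces a contradiction via a class in the holomorphic de~Rham cohomology of $W$; it also requires the preliminary reduction (Claim~\ref{cl:ceros}) to the case where $\phi_1,\phi_2,d\xi$ do not vanish on $C_S$. Without an argument of this kind showing that period corrections can be realized \emph{within} the family of deformations preserving (a) and (c), your proof of (b) does not go through.
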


\begin{proof}
Assume (A) holds.

\begin{claim}\label{cl:ceros}
Without loss of generality it can be assumed that $\phi_1,$ $\phi_2$ and $d\xi$ never vanish on $C_S,$ where $\xi:=\phi_1/\phi_2.$
\end{claim}
\begin{proof} Assume for a moment that the conclusion of the lemma holds when $\phi_1,$ $\phi_2$ and $d\xi$ never vanish on $C_S.$

Take a sequence $\{M_k\}_{k\in\n}$ as in the proof of Lemma \ref{lem:funaprox} such that $\Phi$ holomorphically extends (with the same name) to $M_1,$ and $\phi_1,$ $\phi_2$ and $d\xi$ never vanish on $M_1-M_S,$ for all $n$  (take into account (A)). Recall that $S_k:=M_k \cup C_S\subset W^\circ$ is an admissible set and $C_{S_k}= C_S-M_k^\circ,$ $k \in \n.$

Since $\Theta$ never vanishes on $C_S,$ which
consists of a finite collection of pairwise disjoint analytical
Jordan arcs, then we can find $\theta\in\Omega_0(C_S)$ with $\theta^2=\Theta|_{C_S}.$ Consider $f_j:C_S\to\c,$ $f_j=\phi_j/\theta,$ $j=1,2,$ and notice that $f_1^2+f_2^2=1$ and $\xi|_{C_S}=f_1/f_2.$ Consider a sequence $\{(f_{1,k},f_{2,k})\}_{k \in \n}$ of pairs of smooth functions on $C_{S_k}$ satisfying:
\begin{enumerate}[ $i)$]
\item $f_{1,k},$ $f_{2,k}$ and $d f_{1,k}$ never vanish on $C_{S_k},$
\item $f_{1,k}^2+f_{2,k}^2=1,$ 
\item the function $g_{j,k}$ given by $g_{j,k}|_{M_{S_k}}=f_j,$ $g_{j,k}|_{C_{S_k}}=f_{j,k},$ lies in $\Fcal_0^*(S_k)$ and is smooth, $j=1,2,$
\item $\{f_{j,k}\}_{k \in \n}$ uniformly converges to $f_j$ on $C_S,$ $j=1,2,$ and
\item $\Psi_k|_S-\Phi$ is exact on $S,$ where $\Psi_k:=(g_{j,k}\theta)_{j=1,2}\in \Omega_0^*(S_k)^2\cap \Omega_0(M_{S_k})^2.$ 
\end{enumerate}
The construction of this data is standard, we omit the details. Write $\Psi_k=(\psi_{j,k})_{j=1,2}$ and $\xi_k=\psi_{1,k}/\psi_{2,k}.$ From $i),$ $ii)$ and the definition of $\theta$ follow that $\psi_{1,k}^2+\psi_{2,k}^2=\Theta$ and $d\xi_k$ never vanishes on $C_{S_k}.$ Moreover, $iv)$ gives that $\{\Psi_k|_S\}_{k \in \n}$ uniformly converges to $\Phi$ on $S.$

By hypothesis, Lemma \ref{lem:ap} holds for any $\Psi_k,$ then there exists a sequence $\{\Psi_{k,n}\}_{n\in\n}$ uniformly converging to $\Psi_k$ on $S_k$ and satisfying (a), (b) and (c) of Lemma \ref{lem:ap} for $\Phi=\Psi_k$ and $S=S_k.$ Using that $\{\Psi_k|_S\}_{k \in \n}$ converges to $\Phi,$ the zeros of $\Psi_k$ in $M_{S_k}$ are those of $\Phi$ in $M_S,$ $v),$ and a standard diagonal argument, we can obtain a sequence satisfying the conclusion of the lemma,  proving the claim.
\end{proof}
In the sequel we will assume that  $\phi_1,$ $\phi_2$ and $d\xi$ never vanish on $C_S.$

Label $\eta=\phi_1-\imath \phi_2\in\Omega_0^*(S)\cap  \Omega_0(M_S)$ and observe that $\Theta/\eta=\phi_1+\imath\phi_2\in\Omega_0^*(S)\cap \Omega_0(M_S).$ Notice that  $(\Theta/\eta)|_{M_S},$ $\eta|_{M_S}\not\equiv 0,$ 
\[
\phi_1=\frac12\left(\eta+\frac{\Theta}{\eta}\right)\quad \text{and}\quad \phi_2=\frac{\imath}{2}\left(\eta-\frac{\Theta}{\eta}\right).
\]

Let $\Bcal_S$ be a homology basis of $\Hcal_1(S,\z)$ and label $\nu$ as its cardinal number. Consider in $\Fcal_0^*(S)$ the maximum norm and the Fr\'{e}chet differentiable map
\[
\Pcal:\Fcal_0^*(S)\to \c^{2\nu},\quad
\Pcal(f)=\left( \int_{c} \left( e^f\eta+e^{-f}\frac{\Theta}{\eta}-2\phi_1 \;,\; e^f\eta-e^{-f}\frac{\Theta}{\eta}+2\imath\phi_2 \right) \right)_{c \in \Bcal_S}.
\]
Label $\Acal:\Fcal_0^*(S) \to \c^{2\nu}$ as the Fr\'{e}chet derivative of $\Pcal$ at $0.$

\begin{claim}\label{cla:sobre}
$\Acal|_{\Fcal_0(W)}$ is surjective.
\end{claim}
\begin{proof}
Reason by contradiction and assume that $\Acal(\Fcal_0(W))$ lies in a complex subspace
$\Ucal=\{\big((x_c,y_c)\big)_{c \in \Bcal_S} \in \c^{2\nu}\,|\; \sum_{c \in \Bcal_S} \big(A_c x_c+B_c y_c\big)=0\},$ where
$A_c,B_c \in \c,$ $\forall c\in \Bcal_S,$  and 
\begin{equation}\label{eq:curvas}
\sum_{c \in \Bcal_S} \big(|A_c|+|B_c|\big)\neq 0.
\end{equation}
Then, writing $\Gamma_1= \sum_{c \in \Bcal_S} A_c\, c$ and $\Gamma_2= \sum_{c \in \Bcal_S} B_c\, c,$ we have
\begin{equation}\label{eq:sobre1}
\int_{\Gamma_1} f\phi_2+\imath\int_{\Gamma_2} f\phi_1=0,\quad \forall f\in\Fcal_0(W).
\end{equation}
Denote by $\Sigma=\{f\in\Fcal_0(W)\;|\; (f)\geq (\phi_2|_{M_S})^2\}$ (recall that $\phi_2$ never vanishes on $C_S$). Then for any $f\in \Sigma$ the function $df/\phi_2\in \Fcal_0^*(S)\cap\Fcal_0(M_S),$ so it can be uniformly approximated on $S$ by functions in $\Fcal_0(W).$ This fact is trivial when $f$ is constant, otherwise use Lemma \ref{lem:funaprox}. Hence equation \eqref{eq:sobre1} applies and gives
\begin{equation}\label{eq:sobre2}
0=\int_{\Gamma_2} \xi df = \int_{\Gamma_2} f\, d\xi,\quad \forall f\in\Sigma,
\end{equation}
where we have used integration by parts (notice that $f \xi,$  $\xi df$ and $f d\xi$ are smooth).

Suppose $\Gamma_2\neq 0$ and take $[\tau]\in \Hcal^1_{\text{hol}}(W)$ (the first holomorphic De Rham cohomology group of $W$) and $g\in \Fcal_0(W)$ so that $\int_{\Gamma_2}\tau\neq 0,$ the function $f:=(\tau+dg)/d\xi$ lies in $\Fcal_0^*(S)\cap\Fcal_0(M_S)$ and $(f|_{M_S})\geq (\phi_2|_{M_S})^2.$
The existence of such 1-form and function follows from well known arguments on Riemann surfaces theory (take into account that (A) implies $d\xi|_{M_S}\not\equiv 0$). By Lemma \ref{lem:funaprox}, $f$ can be uniformly approximated on $S$ by functions in $\Sigma,$ so equation \eqref{eq:sobre2} applies and shows that $0=\int_{\Gamma_2} f d\xi=\int_{\Gamma_2} (\tau+dg)=\int_{\Gamma_2}\tau\neq 0,$ a contradiction. Therefore $\Gamma_2=0.$

Replacing $(\xi,\phi_1,\phi_2,\Gamma_1,\Gamma_2)$ by  $(1/\xi,\phi_2,\phi_1,\Gamma_2,\Gamma_1)$  and using a symmetric argument, we can prove that $\Gamma_1=0.$ This contradicts \eqref{eq:curvas} and  concludes the proof.
\end{proof}

Let $\{e_1,\ldots,e_{2\nu}\}$ be a basis of $\c^{2\nu},$  fix
$f_i \in \Acal^{-1}(e_i)\cap \Fcal_0(W)$ for all $i,$ and set $\Qcal:\c^{2 \nu} \to \c^{2 \nu}$ as the analytical map given by
\[
\Qcal((z_i)_{i=1,\ldots,2 \nu})=\Pcal(\sum_{i=1,\ldots,2\nu} z_i f_i).
\]

By Claim \ref{cla:sobre} the differential $d\Qcal_0$ of $\Qcal$ at $0$ is 
an isomorphism, then there exists a closed Euclidean ball $U\subset
\c^{2\nu}$ centered at the origin such that $\Qcal:U \to
\Qcal(U)$ is an analytical diffeomorphism. Furthermore,
notice that $0=\Qcal(0) \in \Qcal(U)$ is an interior
point of  $\Qcal(U).$

Consider a sequence $\{\theta_n\}_{n\in\n}\subset\Omega_0(W)$ uniformly approximating $\eta$ on $S$ and with $(\theta_n)=(\eta|_{M_S})$ for all $n$ (recall that $\eta|_{M_S}\not\equiv 0$ and see Lemma \ref{lem:formaprox}).

Label $\Pcal_n:\Fcal_0^*(S)\to \c^{2\nu}$ as the Fr\'{e}chet differentiable map given by
\[
\Pcal_n(f)=\left( \int_{c} \left( e^f\theta_n+e^{-f}\frac{\Theta}{\theta_n}-2\phi_1 \;,\; e^f\theta_n-e^{-f}\frac{\Theta}{\theta_n}+2\imath\phi_2 \right) \right)_{c \in \Bcal_S},\quad \forall n\in\n.
\]
Call $\Qcal_n:\c^{2\nu} \to \c^{2\nu}$ as the analytical map $\Qcal_n((z_i)_{i=1,\ldots,2\nu})=\Pcal_n(\sum_{i=1,\ldots,2 \nu} z_i f_i)$  for all $n \in \n.$ Since $\{\Qcal_n\}_{n \in \n} \to \Qcal$  uniformly on compacts subsets of $\c^{2\nu},$ without loss of generality we can suppose that $\Qcal_n:U \to \Qcal_n(U)$ is an analytical diffeomorphism  and  $0 \in \Qcal_n(U)$ for all $n.$ Label $\alpha_n=(\alpha_{1,n},\ldots,\alpha_{2\nu,n})$ as the unique point in $U$ such that $\Qcal_n (\alpha_n)=0$ and note that $\{\alpha_n\}_{n \in \n} \to 0.$ Set
\[
\eta_n:=e^{\sum_{i=1}^{2\nu} \alpha_{i,n} f_i} \theta_n,\quad 
\phi_{1,n}:=\frac12\left(\eta_n+\frac{\Theta}{\eta_n}\right)\quad \text{and} \quad \phi_{2,n}:=\frac{\imath}2\left(\eta_n-\frac{\Theta}{\eta_n}\right), \quad\forall n\in\n
\]
and let us check that the sequence $\{\Phi_n=(\phi_{1,n},\phi_{2,n})\}_{n \in\n }$ satisfies the conclusion of the lemma. Indeed, since $(\eta_n)=(\theta_n)=(\eta|_{M_S})$ one has $\Theta/\eta_n\in \Omega_0(W)$ and so $\Phi_n\in\Omega_0(W)^2.$ The convergence of $\{\Phi_n\}_{n\in\n}$ to $\Phi$ on $S$ follows from the ones of $\{\theta_n\}_{n\in\n}$ to $\eta$ and of $\{\alpha_n\}_{n\in\n}$ to $0.$ A straightforward computation gives (a). The fact that $\Qcal_n(\alpha_n)=0,$ $n\in\n,$ implies (b). Finally, $(\eta_n)=(\eta|_{M_S})$ for all $n$ implies (c).


The proof of the lemma in case (B)  goes as follows.

Notice that $\Theta= 0$ is nothing but $\phi_2= \beta\phi_1,$ where $\beta\in\{\imath,-\imath\}.$

As above, we can assume without loss of generality that $\phi_1$ never vanishes on $C_S$ (we omit the details). Reasoning as in case (A), we can prove that $\hat{\Acal}|_{\Fcal_0(W)}:\Fcal_0(W) \to \c^\nu$ is surjective, where $\hat{\Acal}$ is the Fr\'{e}chet derivative of $\hat{\Pcal}:\Fcal_0^*(S) \to \c^\nu,$  $\hat{\Pcal}(f)=\left(\int_c (e^f-1) \phi_1\right)_{c\in \hat{\Bcal}_S},$ at $0.$   Take $\hat{f}_i \in \hat{\Acal}^{-1}(\hat{e}_i)\cap \Fcal_0(W)$ for all $i$, where $\hat{\Bcal}_S=\{\hat{e}_1,\ldots, \hat{e}_\nu\}$ is a basis of $\c^\nu,$ and define $\hat{\Qcal}:\c^\nu \to \c^\nu$ by $\hat{\Qcal}((z_i)_{i=1,\ldots,\nu})=\hat{\Pcal}(\sum_{i=1,\ldots,\nu} z_i \hat{f}_i).$ Now, consider a sequence $\{\hat{\theta}_n\}_{n\in\n}\subset\Omega_0(W)$ that uniformly approximates $\phi_1$ on $S$ and $(\hat{\theta}_n)=(\phi_1|_{M_S})$ for all $n$ (as above, recall that $\phi_1|_{M_S}\not\equiv 0$ and see Lemma \ref{lem:formaprox}). 
Set $\hat{\Pcal}_n:\Fcal_0^*(S) \to \c^\nu$ by $\hat{\Pcal}_n(f)=\left(\int_c (e^f\hat{\theta}_n- \phi_1)\right)_{c\in \hat{\Bcal}_S},$ and call $\hat{\Qcal}_n:\c^\nu \to \c^\nu$ as the analytical map $\hat{\Qcal}_n((z_i)_{i=1,\ldots,\nu})=\hat{\Pcal}_n(\sum_{i=1,\ldots,\nu} z_i \hat{f}_i)$  for all $n \in \n.$  To finish, reason as in case (A) but setting $\phi_{1,n}:=e^{\sum_{i=1}^\nu \hat{\alpha}_{i,n} \hat{f}_i} \hat{\theta}_n$ and $\phi_{2,n}:=\beta \phi_{1,n},$ where $\hat{\alpha}_n=(\hat{\alpha}_{1,n},\ldots,\hat{\alpha}_{\nu,n})$ is chosen so that $\hat{\Qcal}_n(\hat{\alpha}_n)=0$ and $\{\hat{\alpha}_n\}_{n\in\n}\to 0.$
\end{proof}


\section{Main Results}\label{sec:main}

The main results of this paper follow as consequence of Lemma \ref{lem:main} below. Although the proof of this lemma is inspired by the technique developed in \cite[Lemma 3.1]{AFL}, it represents a wide generalization of that result.

We need the following notations and definitions. 

Fix a nowhere zero $\tau_0\in \Omega_0(\Ncal)$ (the existence of such a $\tau_0$ is well known, anyway see \cite{AFL} for a proof). Then for any compact subset $K\subset \Ncal$ and any $\theta\in\Omega_0^*(K)$ we set $\|\theta\|:=\max_K\{|\theta/\tau_0|\}.$ This norm induces the topology of the uniform convergence on $\Omega_0^*(K).$

Let $K\subset\Ncal$ be a connected compact region and $\sigma^2$ a Riemannian metric on $K$ possibly with singularities. Given $P,Q\in K$ we denote by $\dist_{(K,\sigma)}(P,Q)=\min\{\text{length}_\sigma(\alpha)\;|\; \alpha$ curve in $K$ joining $P$ and $Q\}.$ If $K_1$ and $K_2$ are two compact sets in $K$ we set $\dist_{(K,\sigma)}(K_1,K_2)=\min\{\dist_{(K,\sigma)}(P,Q)\;|\;P\in K_1,\, Q\in K_2\}.$

\begin{lemma}\label{lem:main}
Let $M,$ $K$ be two compact regions in $\Ncal$ with $M\subset K^\circ.$ Assume that $M$ is Runge, $K$ is connected and consider $P_0\in M^\circ.$  Let $\Ical$ be a conformal Riemannian metric on $K$ possibly with isolated singularities. Let $\ftt=(\ftt_1,\ftt_2):\Hcal_1(K,\z)\to\c^2$ be a group homomorphism, $\Theta\in \mho_0(K)$ and $\Phi=(\phi_1,\phi_2)\in \Omega_0(M)^2$ satisfying
\[
\phi_1^2+\phi_2^2=\Theta|_{M},\quad \text{$\ftt(\gamma)=\int_\gamma \Phi,$ $\forall \gamma\in\Hcal_1(M,\z),$}
\]
and either of the following conditions:
\begin{enumerate}[{\rm (A)}]
\item $\phi_1$ and $\phi_2$ are linearly independent in $\Omega_0(M).$
\item $\Theta= 0,$ $\phi_1\not\equiv 0$ and there is $\beta \in\{\imath,-\imath\}$ such that $\ftt_2=\beta\ftt_1$ and $\phi_2=\beta\phi_1.$
\end{enumerate}

Then, for any $\epsilon>0$ there exists $\Psi=(\psi_1,\psi_2)\in\Omega_0(K)^2$ so that
\begin{enumerate}[{\rm ({L}1)}]
\item $\|\Psi-\Phi\|<\epsilon$ on $M,$
\item $\psi_1^2+\psi_2^2=\Theta,$
\item $\ftt (\gamma)=\int_\gamma \Psi,$ $\forall \gamma \in\Hcal_1(K,\z),$
\item $\dist_{(K,\sigma_{(\Psi,\Ical)})}(P_0,\partial K)>1/\epsilon,$ where $\sigma_{(\Psi,\Ical)}^2:=|\psi_1|^2+|\psi_2|^2+\Ical,$ and
\item the zeros of $\Psi$ on $K$ are those of $\Phi$ on $M$ (in particular, $\Psi$ never vanishes on $K-M$).
\end{enumerate}
\end{lemma}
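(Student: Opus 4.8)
The plan is to combine the approximation result of Lemma \ref{lem:ap} with a classical exhaustion-and-labyrinth argument à la Jorge–Xavier / Nadirashvili, adapted to the present setting as in \cite[Lemma 3.1]{AFL}. First I would reduce to the case where $K$ is a compact tubular neighborhood of an admissible set $S$ obtained from $M$ by attaching finitely many analytical arcs, so that $M=M_S$ and the inclusion $M\hookrightarrow K$ is a homotopy equivalence; this is standard and ensures that a homology basis of $\Hcal_1(M,\z)$ is also one of $\Hcal_1(K,\z)$, so conditions (L1) and (L3) are compatible. The real work is to produce, on such a $K$, a pair $\Psi$ close to $\Phi$ on $M$, with the prescribed Hopf differential and periods, and with large intrinsic distance from $P_0$ to $\partial K$ measured in the metric $\sigma_{(\Psi,\Ical)}^2=|\psi_1|^2+|\psi_2|^2+\Ical$.

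The central step is a finite recursion. I would fix a large integer $N\sim 1/\epsilon$ and construct a ``labyrinth'': a chain $M=S_0\subset S_1\subset\cdots\subset S_N=S$ of admissible sets, where each $S_{k}$ is obtained from $S_{k-1}$ by adjoining a single short analytical arc winding inside one of the annular components of $K-M$, chosen so that any curve from $P_0$ to $\partial K$ must cross a definite number of these arcs. At each stage I perturb the current pair on a neighborhood of the newly added arc, using Lemma \ref{lem:ap} (in its relevant case, (A) or (B)) applied with the admissible set being (roughly) the previous region together with the new arc: on the new arc I prescribe $\Phi$ to have large modulus with respect to $\Ical$, so that the metric $|\psi_1|^2+|\psi_2|^2$ is huge along that arc, while on the old region the 1-form is changed by less than $\epsilon 2^{-k}$; Lemma \ref{lem:ap}(b) guarantees the correction is exact there, so the periods over the fixed homology basis are unchanged, and Lemma \ref{lem:ap}(c)/(a) preserve the zero divisor and the identity $\psi_1^2+\psi_2^2=\Theta$. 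Telescoping the estimates gives (L1); (L2), (L3), (L5) are preserved throughout; and the fact that every curve from $P_0$ to $\partial K$ must traverse many arcs of very large $\sigma_{(\Psi,\Ical)}$-length yields the length lower bound in (L4). One has to be slightly careful that prescribing $\Phi$ along an arc is only possible in case (A) when $\phi_1,\phi_2$ remain linearly independent and $\Theta$ has no zeros on the arcs (which holds generically, after a preliminary perturbation since the arcs can be chosen avoiding the finitely many zeros of $\Theta$), and in case (B) that one keeps $\phi_2=\beta\phi_1$; both are built into the two cases of Lemma \ref{lem:ap}.

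The main obstacle I expect is the simultaneous control of three competing requirements along each newly added arc: making $|\psi_1|^2+|\psi_2|^2$ large (to force the distance in (L4)), keeping the perturbation exact on the previously-treated region (to protect the periods in (L3)), and respecting the algebraic constraint $\psi_1^2+\psi_2^2=\Theta$ together with the prescribed divisor of zeros. This is exactly what the Fréchet-derivative surjectivity trick inside the proof of Lemma \ref{lem:ap} is designed to handle, so the plan is to package each recursion step as a single invocation of that lemma rather than redoing the period-killing argument by hand. A secondary technical point is the geometry of the labyrinth: the arcs must be placed so that a curve escaping to $\partial K$ necessarily has large $\Ical$-length \emph{or} crosses many high-$|\Psi|$ arcs — the standard ``enough concentric windings'' construction in each annulus of $K-M$ takes care of this, and I would quote it from \cite{AFL,AL} rather than reprove it.
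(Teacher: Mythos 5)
Your overall skeleton --- kill the extra homology of $K$ relative to $M$ by attaching analytical arcs and invoking Lemma \ref{lem:ap} (whose exactness clause protects the periods in (L3) and whose divisor clause protects (L2) and (L5)), then force completeness inside the remaining annular components by a labyrinth on which the 1-forms are prescribed to be huge --- is the same as the paper's, which proceeds by induction on $-\chi(K-M^\circ)$ with exactly that arc-attaching inductive step (Claim \ref{cla:paso}) and a labyrinth argument in the annular base case (Claim \ref{cla:base}). The genuine gap is in your mechanism for (L4). Your labyrinth consists of \emph{one-dimensional} arcs on which you prescribe $|\Phi|$ to be large, and you conclude by saying that any curve from $P_0$ to $\partial K$ must ``cross a definite number of these arcs''. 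But crossing an arc transversally contributes nothing to $\sigma_{(\Psi,\Ical)}$-length: a metric that is large only on a set of zero area gives no lower bound on the length of a curve meeting that set in finitely many points. Nor can you transfer the largeness to a neighborhood of controlled width: Lemma \ref{lem:ap} gives uniform approximation on the admissible set $S$ only, so the holomorphic approximant is large on the arcs and, by continuity, on \emph{some} neighborhood, but the width of that neighborhood is not quantified and may shrink as the approximation improves; the product (width)$\times$(largeness), which is what must produce the length bound, is therefore uncontrolled.

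This is precisely why the Jorge--Xavier scheme, which the paper follows, uses \emph{two-dimensional} compact pieces: in Claim \ref{cla:base} the sets $\Kcal_{j,n}$ are compact annular sectors of radial width comparable to $m^{-3}$, the data prescribed on them is of size $\lambda>\sqrt{2}\,\mu m^{4}$ relative to $|dz_j|$, and the definition of admissible set allows these pieces to appear as extra components of $M_S$, so a \emph{single} application of Lemma \ref{lem:ap} to $S=M\cup\Kcal$ yields (L1), (L2), (L3), (L5) together with $|\psi_1|^2+|\psi_2|^2>\mu^2m^8|dz_j|^2$ on the pieces. A curve reaching $\partial K$ then either traverses many pieces, each traversal paying (width)$\times$(largeness)$\gtrsim\mu m$, or dodges them through the alternating gaps, in which case it winds and pays through the lower bound $\Ical>\mu^2|dz_j|^2$ on $C_j$; this dichotomy, not ``crossing arcs'', is what gives (L4). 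Replacing your arcs by such thickened pieces repairs the argument (and makes your $N\sim 1/\epsilon$ recursion with errors $\epsilon 2^{-k}$ unnecessary); as written, your proof of (L4) does not go through, while the remaining ingredients of your plan (period bookkeeping via exactness, zero-divisor control, the separate treatment of cases (A) and (B), choosing the arcs away from the zeros of $\Theta$) do match the paper.
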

\begin{proof}
The proof goes by induction on minus the Euler characteristic of $W-M^\circ.$ Since $M$ is Runge then no component of $K -M^\circ$ is a closed disc, and so $-\chi(K -M^\circ)\geq 0.$ The basis of the induction is proved in the following

\begin{claim}\label{cla:base}
Lemma \ref{lem:main} holds if $\chi(K-M^\circ)=0.$
\end{claim}
\begin{proof}
In this case $K^\circ-M=\cup_{j=1}^k A_j$, where $A_j$ are pairwise disjoint open annuli, $k\in\n.$
On each $A_j$ we construct a Jorge-Xavier's type labyrinth of compact sets as follows (see \cite{JX}). Let $z_j:A_j\to\c$ be a conformal parametrization,
and let $C_j\subset A_j$ be a compact region such that $C_j$ contains no singularities of $\Ical,$ $z_j(C_j)$ is a compact annulus of radii $r_j$ and $R_j,$ where $r_j<R_j,$ and $z_j(C_j)$ contains the homology of $z_j(A_j).$ This choice is possible since the singularities of $\Ical$ are isolated. Since $\Ical|_{C_j}$ has no singularities, we can find a positive constant $\mu$ with 
\begin{equation}\label{eq:grande0}
\Ical>\mu^2|dz_j|^2\quad \text{ on $C_j,$ $j=1,\ldots,k.$}
\end{equation}
Consider a large $m\in\n$ (to be specified later) such that
$2/m<\min\{R_j-r_j\;|\;j=1,\ldots,k\}.$ For any $j\in\{1,\ldots,k\}$ label $s_{j,0}:=R_j$ and for any $n\in\{1,\ldots,2m^2\}$ set $s_{j,n}:=R_j-n/m^3$ and consider the compact set in $C_j$ (see Figure \ref{fig:laberinto}):
\[
\Kcal_{j,n}=\left\{ P\in A_j\; \left|\; s_{j,n}+\frac1{4m^3}\leq |z_j(P)|\leq
s_{j,n-1}-\frac1{4m^3},\right.\right.
\left.\frac1{m^2}\leq {\rm arg}((-1)^{n}z_j(P))
\leq 2\pi-\frac1{m^2}\right\}.
\]
\begin{figure}[ht]
    \begin{center}
    \scalebox{0.4}{\includegraphics{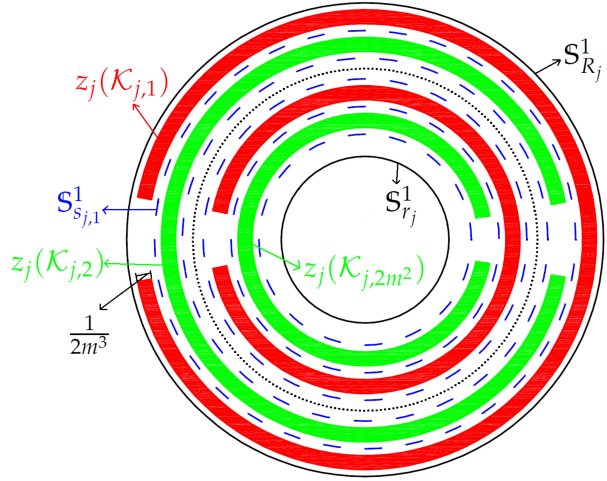}}
        \end{center}
\caption{The labyrinth of compact sets on the annulus $z_j(C_j).$}\label{fig:laberinto}
\end{figure}

Then, define
\[
\Kcal_j=\bigcup_{n=1}^{2m^2}\Kcal_{j,n}\qquad \mbox{and}\qquad  \Kcal=\bigcup_{j=1}^k \Kcal_j.
\]

Consider the pair $\Xi=(\varphi_1,\varphi_2)\in \Omega_0(M\cup \Kcal)^2$ given by
\[
\Xi|_M=\Phi, \qquad \Xi|_{\Kcal_j}=
\begin{cases}
\big(\frac{1}{2}(\lambda dz_j + \frac{\Theta}{\lambda dz_j} )\,,\, \frac{\imath}{2}(\lambda dz_j - \frac{\Theta}{\lambda dz_j})\big)& \text{if (A) holds}
\\
\big(\lambda dz_j\,,\, \beta\lambda dz_j\big)& \text{if (B) holds},
\end{cases}\quad j=1,\ldots,k,
\]
where $\lambda>\sqrt{2}\,\mu m^4$ is a constant. Notice that $\varphi_1^2+\varphi_2^2= \Theta|_{M\cup\Kcal}.$

Let $W\subset\Ncal$ be a domain with finite topology containing $K.$ Applying Lemma \ref{lem:ap} to the data
\[
\hat{W}=W,\quad \hat{S}=M\cup \Kcal,\quad \hat{\Theta}=\Theta,\quad\text{and}\quad \hat{\Phi}=\Xi,
\] 
we obtain a pair $\Psi=(\psi_1,\psi_2)\in\Omega_0(K)^2$ satisfying (L1), (L2), (L3), (L5) and
\begin{equation}\label{eq:grande}
|\psi_1|^2+|\psi_2|^2 > 
\mu^2 m^8 |dz_j|^2\quad \text{ on $\Kcal_j,$ $j=1,\ldots,k.$}
\end{equation}

Then, taking into account \eqref{eq:grande0}, \eqref{eq:grande} and the definition of $\Kcal_j,$ it is straightforward to check the existence of a positive constant $\rho_j$ depending neither on $\mu$ nor $m$ such that
\[
{\rm length}_{\sigma_{(\Psi,\Ical)}}(\alpha)>\rho_j\cdot \mu\cdot m
\]
for any $\alpha$ curve in $C_j$ joining the two components of $\partial C_j$. Thus, we can choose $m$ large enough so that
$\rho_j\cdot \mu\cdot m>1/\epsilon$ for any $j=1,\ldots,k$. This choice gives (L4) and we are done.
\end{proof}

The inductive step and so Lemma \ref{lem:main} are proved in the following

\begin{claim}\label{cla:paso}
Consider $n > 0$ and assume that Lemma \ref{lem:main} holds if $-\chi(K -M^\circ)< n.$ Then it also holds if
$-\chi(K-M^\circ)=n.$
\end{claim}
\begin{proof} Since $M$ is Runge, $\jmath_*:\Hcal_1(M,\z) \to \Hcal_1(K,\z)$ is a monomorphism, where $\jmath:M \to K$ is the inclusion map. Up to this natural identification we will  consider $\Hcal_1(M,\z) \subset \Hcal_1(K,\z).$ 
Since $-\chi(K-M^\circ)=n>0$, there exists $\hat\gamma\in\Hcal_1(K,\z)-\Hcal_1(M,\z)$ intersecting $K-M^\circ$
in a compact Jordan arc $\gamma$ with endpoints $P_1,P_2\in\partial M$ and otherwise disjoint from $\partial M\cup \partial K,$ and such that $S:=M\,\cup\,\gamma$ is admissible. Notice that in this case $\gamma=C_S$ and $M=M_S.$

Assume (A) holds, and in addition choose $\hat{\gamma}$ so that $\Theta$ never vanishes on $\gamma.$ 
Consider a pair $\hat{\Phi}=(\hat{\phi}_1,\hat{\phi}_2)\in \Omega_0^*(S)^2 \cap \Omega_0(M_S)^2$ satisfying
$\hat{\Phi}|_M=\Phi,$ $\hat{\phi}_1^2+\hat{\phi}_2^2= \Theta|_S$ and $\int_{\hat{\gamma}}\hat{\Phi}=\ftt(\hat{\gamma})$ (we leave the details to the reader). By Lemma \ref{lem:ap}, case (A), applied to $\hat{\Phi},$ $S,$ $\Theta$ and $K^\circ,$ we can find a compact tubular neighborhood  $U$ of $S$ in $K^\circ$ and $\Xi=(\varphi_1,\varphi_2)\in\Omega_0(U)^2$ such that $\varphi_1$ and $\varphi_2$ are linearly independent in $\Omega_0(U)^2,$
$\|\Xi-\Phi\|<\epsilon/2$ on $M,$ $\varphi_1^2+\varphi_2^2= \Theta|_{U},$ the zeros of $\Xi$ on $U$ are those of $\Phi$ on $M,$ and $\Xi-\hat{\Phi}$ is exact on $S.$ 
Since $-\chi(K-U^\circ)<n$, the induction hypothesis applied to $\Xi$ and $\epsilon/2$ gives the existence of a pair $\Psi\in\Omega_0(K)^2$ satisfying the conclusion of the lemma.

Assume now that (B) holds, and take a function $\hat{\phi}_1\in\Omega_0^*(S)\cap \Omega_0(M_S)$ such that $\hat{\phi}_1|_M=\phi_1$ and $\int_{\hat{\gamma}}\hat{\phi}_1=\ftt_1(\hat{\gamma}).$ Apply Lemma \ref{lem:ap}, case (B), to the data $K^\circ,$ $S$ and $(\hat{\phi}_1,\beta\hat{\phi}_1),$ and obtain a compact tubular neighborhood $U$ of $S$ in $K^\circ$ and a 1-form $\varphi_1\in\Omega_0(U)$ such that $\|\varphi_1-\phi_1\|<\epsilon/4$ on $M,$ the zeros of $\varphi_1$ on $U$ are those of $\phi_1$ on $M,$ and $\varphi_1-\hat{\phi_1}$ is exact on $S.$ As above, the induction hypothesis applied to $(\varphi_1,\beta\varphi_1)$ and $\epsilon/2$ gives a pair $\Psi\in\Omega_0(K)^2$ proving the claim.
\end{proof}
This finishes the proof of the lemma.
\end{proof}


Now we can state and prove the main theorem of this paper.

\begin{theorem}\label{th:main}
Let $M\subset \Ncal$ be a Runge compact region. Let $\Ical$ be a conformal Riemannian metric on $\Ncal$ possibly with isolated singularities. Consider $\ftt=(\ftt_1,\ftt_2):\Hcal_1(\Ncal,\z)\to\c^2$ be a group homomorphism, $\Theta\in \mho_0(\Ncal)$ and $\Phi=(\phi_1,\phi_2)\in \Omega_0(M)^2$ satisfying
\[
\phi_1^2+\phi_2^2=\Theta|_{M},\quad \text{$\ftt(\gamma)=\int_\gamma \Phi,$ $\forall \gamma\in\Hcal_1(M,\z),$}
\]
and either of the following conditions:
\begin{enumerate}[{\rm (A)}]
\item $\phi_1$ and $\phi_2$ are linearly independent in $\Omega_0(M).$
\item $\Theta= 0,$ $\phi_1\not\equiv 0$ and there is $\beta \in\{\imath,-\imath\}$ such that $\ftt_2=\beta\ftt_1$ and $\phi_2=\beta\phi_1.$
\end{enumerate}

Then, for any $\epsilon>0$ there exists $\Psi=(\psi_1,\psi_2)\in\Omega_0(\Ncal)^2$ so that
\begin{enumerate}[{\rm ({T}1)}]
\item $\|\Psi-\Phi\|<\epsilon$ on $M,$
\item $\psi_1^2+\psi_2^2=\Theta,$
\item $\ftt (\gamma)=\int_\gamma \Psi,$ $\forall \gamma \in\Hcal_1(\Ncal,\z),$
\item $|\psi_1|^2+|\psi_2|^2+\Ical$ is a complete conformal Riemannian metric on $\Ncal$ with singularities at the zeros of $|\phi_1|^2+|\phi_2|^2+\Ical$ on $M,$ and
\item the zeros of $\Psi$ on $\Ncal$ are those of $\Phi$ on $M$ (in particular, $\Psi$ never vanishes on $\Ncal-M$).
\end{enumerate}
\end{theorem}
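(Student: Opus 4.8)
The plan is to deduce Theorem~\ref{th:main} from Lemma~\ref{lem:main} by a standard exhaustion-and-diagonalization argument on $\Ncal$. First I would fix a normal exhaustion $M=M_0\subset M_1\subset M_2\subset\cdots$ of $\Ncal$ by Runge compact regions with $M_{k}\subset M_{k+1}^\circ$ and $\cup_k M_k=\Ncal$; such an exhaustion exists because $M$ is Runge, and one may further arrange that each inclusion $M_k\hookrightarrow M_{k+1}$ either is a homotopy equivalence or adds a single handle, so that $-\chi(M_{k+1}-M_k^\circ)$ is controlled. Pick $P_0\in M^\circ$ once and for all. The idea is to build inductively a sequence $\Phi=\Phi_0,\Phi_1,\Phi_2,\ldots$ with $\Phi_k=(\phi_{1,k},\phi_{2,k})\in\Omega_0(M_k)^2$ satisfying $\phi_{1,k}^2+\phi_{2,k}^2=\Theta|_{M_k}$, $\ftt(\gamma)=\int_\gamma\Phi_k$ for all $\gamma\in\Hcal_1(M_k,\z)$, the period/linear-independence hypothesis (A) or (B) on $M_k$, together with the convergence estimates $\|\Phi_{k}-\Phi_{k-1}\|<\epsilon/2^{k+1}$ on $M_{k-1}$, a completeness-type inequality $\dist_{(M_k,\sigma_{(\Phi_k,\Ical)})}(P_0,\partial M_k)>k$, and the divisor condition that the zeros of $\Phi_k$ on $M_k$ are exactly those of $\Phi$ on $M$.

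The inductive step is exactly one application of Lemma~\ref{lem:main} with the roles $M\rightsquigarrow M_{k-1}$, $K\rightsquigarrow M_k$, the group homomorphism $\ftt|_{\Hcal_1(M_k,\z)}$, the 2-form $\Theta|_{M_k}$, the metric $\Ical|_{M_k}$, the pair $\Phi_{k-1}$, and $\epsilon\rightsquigarrow\epsilon_k:=\min\{\epsilon/2^{k+1},1/k\}$ — here one must check $K=M_k$ is connected, $M_{k-1}$ is Runge in $M_k$, and hypothesis (A) or (B) is inherited, which is immediate since (A) and (B) are preserved under the approximation produced by the lemma (the $\psi_i$ stay linearly independent because they are close to the $\phi_i$ and have the prescribed zero divisor in case (A), and in case (B) the relation $\psi_2=\beta\psi_1$ is built in). Conclusions (L1)--(L5) of the lemma give precisely the bulleted properties of $\Phi_k$, using $\sigma_{(\Phi_k,\Ical)}^2=|\phi_{1,k}|^2+|\phi_{2,k}|^2+\Ical$ and (L4) for the distance bound.

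For the limit, the estimates $\|\Phi_k-\Phi_{k-1}\|<\epsilon/2^{k+1}$ on $M_{k-1}$ make $\{\Phi_k\}$ a Cauchy sequence for uniform convergence on compact subsets of $\Ncal$ (with respect to the reference 1-form $\tau_0$), so it converges to some $\Psi=(\psi_1,\psi_2)\in\Omega_0(\Ncal)^2$; uniform limits of holomorphic 1-forms are holomorphic, giving $\Psi\in\Omega_0(\Ncal)^2$. Properties (T1)--(T3) pass to the limit directly: (T1) from the telescoping sum $\sum_{k\ge 1}\epsilon/2^{k+1}<\epsilon$, (T2) from pointwise convergence of $\psi_{1,k}^2+\psi_{2,k}^2$ to $\psi_1^2+\psi_2^2$ and of $\Theta|_{M_k}$ to $\Theta$, and (T3) because every $\gamma\in\Hcal_1(\Ncal,\z)$ lies in some $M_k$ and the periods are eventually constant. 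For (T4) I would use a Harnack-type argument: fix a compact $M_{k-1}$; by (T1)-type control on all of $M_{k-1}$ one gets $\sigma_{(\Psi,\Ical)}\ge\tfrac12\sigma_{(\Phi_{k-1},\Ical)}$ on $M_{k-2}$ once $k$ is large (shrinking $\epsilon_k$ further at each stage if needed so the tail perturbation is dominated), whence any divergent path in $\Ncal$ either stays in some $M_k$ forever — impossible since $M_k$ is compact — or crosses infinitely many annuli $M_k-M_{k-1}^\circ$, and (L4) for each stage forces its $\sigma_{(\Psi,\Ical)}$-length to be infinite; completeness of $|\psi_1|^2+|\psi_2|^2+\Ical$ follows, with singularities exactly at the zeros of $|\phi_1|^2+|\phi_2|^2+\Ical$ on $M$ by (T5). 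Finally (T5) is immediate from the divisor condition preserved at every stage together with the fact that a locally uniform limit of nowhere-vanishing holomorphic 1-forms is either nowhere vanishing or identically zero (Hurwitz), applied on $\Ncal-M$. The main obstacle is the careful bookkeeping in (T4): one must choose the tolerances $\epsilon_k$ small enough, stage by stage, that the accumulated perturbations on each fixed compact region never destroy more than half of the metric built there, so that the distance lower bounds from (L4) survive in the limit; this is the one place where a naive ``take limits'' does not suffice and the quantitative control from Lemma~\ref{lem:main}(L4), combined with a diagonal choice of $\epsilon_k$ depending on the already-constructed $\Phi_{k-1}$, is essential.
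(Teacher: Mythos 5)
Your proposal takes essentially the same route as the paper's proof: exhaust $\Ncal$ by Runge connected compact regions, apply Lemma \ref{lem:main} recursively to produce pairs with telescoping approximation bounds, the square relation with $\Theta$, the prescribed periods, diverging intrinsic distance to the boundary and preserved zero divisors, and then pass to the limit, obtaining (T1)--(T3) directly, (T5) via Hurwitz plus smallness of the accumulated error, and (T4) from the stage-wise distance estimates. Your additional remarks on inheriting hypotheses (A)/(B) at each stage and on choosing the tolerances so the length estimates survive in the limit only spell out details the paper leaves implicit, so the argument is correct and matches the paper's.
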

\begin{proof}
Label $M_1=M$ and let $\{M_n\,|\; n \geq 2\}$ be an exhaustion of $\Ncal$ by Runge connected compact regions with $M_n \subset M_{n+1}^\circ$ for all $n\in\n.$ Fix a base point $P_0\in M^\circ$ and a positive $\varepsilon<\min\{\epsilon,1\}$ which will be specified later.  

Label $\Phi_1=\Phi,$ and by Lemma \ref{lem:main} and an inductive
process, construct a sequence of pairs $\{\Phi_n=(\phi_{j,n})_{j=1,2}\}_{n \in \n}$ satisfying that
\begin{enumerate}[{\rm (a)}]
  \item $\Phi_n \in \Omega_0(M_n)^2,$ $\forall n \in \n,$
  \item $\|\Phi_n-\Phi_{n-1}\|< \varepsilon/2^n$ on $M_{n-1},$ $\forall n \geq 2,$
  \item $\phi_{1,n}^2+\phi_{2,n}^2=\Theta|_{M_n},$ $\forall n\in\n,$
  \item $\ftt(\gamma)=\int_\gamma\Phi_n,$ $\forall\gamma\in \Hcal_1(M_n,\z),$ $\forall n \in \n,$
  \item $\dist_{(M_n,\sigma_{(\Phi_n,\Ical)})}(P_0,\partial M_n)>2^n,$ where $\sigma_{(\Phi_n,\Ical)}^2=|\phi_{1,n}|^2+|\phi_{2,n}|^2+\Ical,$ $\forall n \geq 2,$ and
  \item the zeros of $\Phi_n$ on $M_n$ are those of $\Phi$ on $M,$ $\forall n\in\n.$ 
\end{enumerate}

Since $\cup_{n\in\n}M_n=\Ncal,$ items (a) and (b) and Harnack's theorem, then the sequence $\{\Phi_n\}_{n\in
\n}$ uniformly converges on compact subsets of $\Ncal$ to a
pair $\Psi=(\psi_j)_{j=1,2}\in\Omega_0(\Ncal)$ satisfying (T1). Items (c) and (d) directly give (T2) and (T3), respectively. Since $\{\Phi_n\}_{n\in\n}$ uniformly converges to $\Psi$ and (f), Hurwitz's theorem gives that
either the zeros of $\Psi$ on $\Ncal$ are those of $\Phi$ on $M$ or $\psi_1=0$ or $\psi_2=0.$
However, (b) gives $\|\Psi-\Phi\|\leq\varepsilon$ on $M$ and so $\psi_j|_M\not\equiv 0,$ $j=1,2,$ provided that $\varepsilon$ is small enough. This proves (T5). Finally (T5) and (e) imply (T4) and we are done.
\end{proof}


\begin{corollary}\label{co:harmonic}
Let $\Hgot,$ $X=(X_i)_{i=3,\ldots,\nsc}:\Ncal\to\r^{\nsc-2}$ and $\ptt=(\ptt_j)_{j=1,\ldots,\nsc}:\Hcal_1(\Ncal,\z)\to\r^\nsc$ be a 2-form in $\mho_0(\Ncal),$ a non-constant harmonic map and   a group homomorphism, respectively, satisfying that
\begin{itemize}
\item $\ptt_i(\gamma)={\rm Im}\int_\gamma \partial_z X_i,$ $\forall \gamma\in\Hcal_1(\Ncal,\z),$ $\forall i=3,\ldots,\nsc,$ and
\item $\ptt_1= \ptt_2=0$ when $\Hgot= \sum_{i=3}^\nsc (\partial_z X_i)^2.$
\end{itemize}

Then there exists a weakly complete harmonic map $Y=(Y_j)_{j=1,\ldots,\nsc}:\Ncal\to\r^\nsc$ with
\begin{enumerate}[{\rm (I)}]
\item $(Y_i)_{i=3,\ldots,\nsc}=X,$
\item $\ptt_Y=\ptt,$ and
\item $Q_Y=\Hgot.$
\end{enumerate}

Furthermore, if $X$ is full then $Y$ can be chosen to be full, and if $X$ is an immersion then $Y$ is.
\end{corollary}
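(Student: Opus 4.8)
The plan is to reduce Corollary~\ref{co:harmonic} to Theorem~\ref{th:main} by encoding the last $\nsc-2$ coordinates of $Y$ as the prescribed data $\Ical$, $\Theta$ and the flux, and letting $(\psi_1,\psi_2)$ produced by the theorem furnish $\partial_z Y_1$ and $\partial_z Y_2$. First I would set
\[
\Theta:=\Hgot-\sum_{i=3}^\nsc(\partial_z X_i)^2\in\mho_0(\Ncal),\qquad \Ical:=\tfrac12\sum_{i=3}^\nsc|\partial_z X_i|^2,
\]
noting that $\Ical$ is a conformal metric on $\Ncal$ with isolated singularities (at the common zeros of the $\partial_z X_i$) precisely because $X$ is non-constant, so $\Ical\not\equiv 0$. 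The homomorphism for the theorem is $\ftt=(\ftt_1,\ftt_2)$ with $\ftt_j$ determined by requiring ${\rm Im}\,\ftt_j=\ptt_j$ on $\Hcal_1(\Ncal,\z)$; since $\Hcal_1(\Ncal,\z)$ is free abelian this is possible, and one has freedom in the real parts of $\ftt_j$ which will be used to build $Y_1,Y_2$ as globally well-defined real functions.

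The second step is to construct the seed $\Phi=(\phi_1,\phi_2)\in\Omega_0(M)^2$ on a suitable Runge compact region $M$ with $\phi_1^2+\phi_2^2=\Theta|_M$ and $\int_\gamma\Phi=\ftt(\gamma)$ for $\gamma\in\Hcal_1(M,\z)$. I would distinguish the two cases of the theorem exactly as the dichotomy in the hypothesis of the corollary dictates: if $\Theta\not\equiv 0$ (equivalently $\Hgot\neq\sum_{i=3}^\nsc(\partial_z X_i)^2$) aim for case (A), choosing $M$ small (a disc, say, so $\Hcal_1(M,\z)=0$) on which $\Theta$ has a holomorphic square root $\theta$, and set $\phi_1=\tfrac12(\theta\, u+\theta/u)$, $\phi_2=\tfrac{\imath}{2}(\theta\,u-\theta/u)$ for a nowhere-zero holomorphic $u$ on $M$ chosen generic enough that $\phi_1,\phi_2$ are linearly independent; if instead $\Theta\equiv 0$, then $\ptt_1=\ptt_2=0$ by hypothesis, so one may take $\ftt_1=\ftt_2=0$, pick $\beta\in\{\imath,-\imath\}$, a nonzero $\phi_1\in\Omega_0(M)$ (e.g.\ $\tau_0|_M$) with zero periods on $M$, and $\phi_2=\beta\phi_1$, landing in case (B). In both cases the period condition on $M$ can be met by adjusting $\Phi$ by an exact form / by shrinking $M$ to be simply connected.

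The third step is to apply Theorem~\ref{th:main} to $(M,\Ical,\ftt,\Theta,\Phi,\epsilon)$ to obtain $\Psi=(\psi_1,\psi_2)\in\Omega_0(\Ncal)^2$ with $\psi_1^2+\psi_2^2=\Theta$, $\int_\gamma\Psi=\ftt(\gamma)$, and $|\psi_1|^2+|\psi_2|^2+\Ical$ complete. Then define $Y_j$, $j=1,2$, by
\[
Y_j(P)={\rm Re}\,\Big(c_j+\int_{P_0}^{P}\psi_j\Big),
\]
which is well defined and single-valued on $\Ncal$ because ${\rm Re}\int_\gamma\psi_j={\rm Re}\,\ftt_j(\gamma)$ is an exact datum once we have fixed the real parts of $\ftt_j$ to be zero on homology (this is arranged in step one: real parts of periods are a free choice). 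Setting $Y_i=X_i$ for $i\geq 3$ gives $Y=(Y_j)_{j=1}^\nsc:\Ncal\to\r^\nsc$. Then $\partial_z Y=(\psi_1,\psi_2,\partial_z X_3,\ldots,\partial_z X_\nsc)$, so $Y$ is harmonic, $Q_Y=\sum_j(\partial_z Y_j)^2=\Theta+\sum_{i\geq3}(\partial_z X_i)^2=\Hgot$, giving (III); $\ptt_Y(\gamma)={\rm Im}\int_\gamma\partial_z Y$ has $j$-th component ${\rm Im}\,\ftt_j(\gamma)=\ptt_j(\gamma)$ for $j=1,2$ and $\ptt_i(\gamma)$ for $i\geq3$ by hypothesis, giving (II); and $\Gamma_Y=\tfrac12(|\psi_1|^2+|\psi_2|^2)+\Ical$ is complete by (T4), giving weak completeness. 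For the ``furthermore'': $Y$ is full whenever $X$ is, since the $\r^{\nsc-2}$-coordinates already span a complement to no hyperplane; and if $X$ is an immersion then $\sum_{i\geq3}|\partial_z X_i|^2>0$ everywhere, hence $\Ical$ has no singularities and $\Gamma_Y$ is a genuine Riemannian metric, so $Y$ is an immersion.

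The main obstacle is step two: producing the initial pair $\Phi$ on $M$ satisfying \emph{both} the algebraic identity $\phi_1^2+\phi_2^2=\Theta|_M$ \emph{and} the exact period condition $\int_\gamma\Phi=\ftt(\gamma)$, while simultaneously fitting into the rigid dichotomy (A)/(B) — in particular, in case (A) one must ensure linear independence of $\phi_1,\phi_2$, and in case (B) one must verify that the hypothesis $\ptt_1=\ptt_2=0$ indeed forces the compatible choice $\ftt_1=\ftt_2=0$. Shrinking $M$ to a disc neutralizes the period issue entirely, so the real content is just checking the square-root existence of $\Theta$ near a point and a genericity argument for independence; all of this is routine Riemann-surface bookkeeping, which is why I would relegate the details to the reader, as the corollary's phrasing anticipates.
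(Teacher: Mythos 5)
Your reduction is essentially the paper's own proof of the main assertion: the same $\Theta=\Hgot-\sum_{i\geq3}(\partial_z X_i)^2$, the same auxiliary metric $\Ical$ built from $|\partial_z X_i|^2$, the normalization $\ftt=\imath(\ptt_1,\ptt_2)$ so that $\mathrm{Re}\int_\gamma\psi_j=0$ makes $Y_j=\mathrm{Re}\int_{P_0}^{P}\psi_j$ single-valued, a seed pair on a disc of the form $\bigl(\tfrac12(\eta+\Theta/\eta),\tfrac{\imath}{2}(\eta-\Theta/\eta)\bigr)$ (your $\eta=\theta u$ after extracting a local square root is the same thing), the (A)/(B) split according to $\Theta\not\equiv0$ or $\Theta=0$ with $\ftt=0$ forced by the hypothesis $\ptt_1=\ptt_2=0$, and then (T1)--(T5) give (I)--(III) and weak completeness (on your disc $\phi_1,\phi_2$ have no common zero, so the metric in (T4) has no singularities there). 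Up to this point the proposal is correct and matches the paper.

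The gap is in the ``furthermore'' clause. Fullness of $Y$ is \emph{not} automatic from fullness of $X$: a hyperplane relation $a_1Y_1+a_2Y_2+\sum_{i\geq3}a_iX_i=c$ with $(a_1,a_2)\neq(0,0)$ is in no way excluded by the fullness of the last $\nsc-2$ coordinates (nothing in (T1)--(T5) forbids, say, $\psi_1=\partial_zX_3$ a priori), which is exactly why the statement reads ``$Y$ \emph{can be chosen} to be full.'' The paper arranges this by choosing the seed ($\eta$) so that the map $K\ni P\mapsto\bigl(\int_{P_0}^{P}\phi_1,\int_{P_0}^{P}\phi_2,X(P)\bigr)$ is already full on the compact disc $K$, and then fullness, being witnessed on $K$ and stable under small uniform perturbation, survives by (T1) for $\epsilon$ small; your ``generic $u$'' could play the role of $\eta$, but the argument must be made through the seed and (T1), not by claiming fullness for free. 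A smaller blemish: your justification of the immersion claim via $\Gamma_Y>0$ is not valid, since $\partial_z Y\neq0$ only gives $\mathrm{rank}\,dY\geq1$; the correct (and trivial) reason is that $dY$ contains $dX$ as a block, so $X$ an immersion forces $Y$ to be one.
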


\begin{proof}
Label $\Theta:=\Hgot-\sum_{i=3}^\nsc (\partial_z X_i)^2,$ and assume for a moment that ${\Theta}\not\equiv 0.$ Consider a compact disc $K\subset \Ncal$ and $\eta\in\Omega_0(K)$ such that both $\eta$ and $\phi_1$ never vanish on $K,$ and $\phi_1$ and $\phi_2$ are linearly independent in $\Omega_0(K),$ where $\phi_1:=\frac12(\eta+{\Theta}/\eta)$ and $\phi_2:=\frac{\imath}2(\eta-{\Theta}/\eta).$ Consider a pair $\Psi=(\psi_1,\psi_2)$ obtained from Theorem \ref{th:main}, case (A), applied to the data
\[
\Ncal,\quad M=K,\quad \Ical=\sum_{i=3}^\nsc |\partial_z X_i|^2,\quad {\Theta},\quad \Phi=(\phi_1,\phi_2),\quad \ftt=\imath(\ptt_1,\ptt_2)
\]
and $\epsilon>0$ to be specified later. Fix a point $P_0\in\Ncal$ and define $Y_k(P)={\rm Re}\int_{P_0}^P \psi_k,$ $\forall P\in \Ncal,$ $k=1,2,$ and $Y_k=X_k,$ $\forall\, k=3,\ldots,\nsc.$

Statements (I), (II) and (III) trivially follow from the definition of ${\Theta}$ and $\ftt,$ and properties (T2) and (T3). Moreover, (T4) and the fact that $\phi_1$ never vanishes on $K$ give that $\sum_{j=1}^\nsc|\partial_z Y_j|^2$ is a complete conformal metric on $\Ncal,$ and so $Y$ is weakly complete. Finally, if $X$ is full then we can choose $\eta$ so that the map
\[
K\to\r^\nsc,\quad P\mapsto \left( \int_{P_0}^P\phi_1\;,\; \int_{P_0}^P\phi_2\;,\;X(P)\right)
\]
is full as well. Then (T1) gives the fullness of $Y$ provided that $\epsilon$ is chosen small enough.

Assume now that ${\Theta}= 0.$ Take an exact $\phi_1\in\Omega_0(M),$ $\phi_1\not\equiv 0,$ and consider a pair $\Psi$ obtained by applying Theorem \ref{th:main}, case (B), to the data
\[
\Ncal,\quad M=K,\quad \Ical=\sum_{i=3}^\nsc |\partial_z X_i|^2,\quad {\Theta}= 0,\quad \Phi=(\phi_1,\imath\phi_1),\quad \ftt=0
\]
and $\epsilon>0.$ To finish argue as above.
\end{proof}

\begin{corollary}\label{co:minimal}
Let $X=(X_i)_{i=3,\ldots,n}:\Ncal\to\r^{\nsc-2}$ and $\ptt=(\ptt_j)_{j=1,\ldots,\nsc}:\Hcal_1(\Ncal,\z)\to\r^\nsc$ be a non-constant harmonic map and a group homomorphism, respectively, satisfying that
\begin{itemize}
\item $\ptt_i(\gamma)={\rm Im}\int_\gamma \partial_z X_i,$ $\forall \gamma\in\Hcal_1(\Ncal,\z),$ $\forall i=3,\ldots,\nsc,$ and
\item $\ptt_1= \ptt_2=0$ when  $\sum_{i=3}^\nsc (\partial_z X_i)^2= 0.$
\end{itemize}

Then there exists a complete conformal minimal immersion $Y=(Y_j)_{j=1,\ldots,\nsc}:\Ncal\to\r^\nsc$ with
$(Y_i)_{i=3,\ldots,\nsc}=X$ and $\ptt_Y=\ptt.$ Furthermore, $Y$ can be chosen full provided that $X$ is.
\end{corollary}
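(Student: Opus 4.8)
The plan is to derive Corollary \ref{co:minimal} from Corollary \ref{co:harmonic} by choosing the right Hopf differential. Concretely, I would apply Corollary \ref{co:harmonic} with the \emph{same} data $X=(X_i)_{i=3,\ldots,\nsc}$ and $\ptt=(\ptt_j)_{j=1,\ldots,\nsc}$ but with the prescribed holomorphic $2$-form $\Hgot:=\sum_{i=3}^\nsc(\partial_z X_i)^2\in\mho_0(\Ncal)$. The compatibility hypotheses of Corollary \ref{co:harmonic} are then automatically satisfied: the flux condition $\ptt_i(\gamma)={\rm Im}\int_\gamma\partial_z X_i$ is carried over verbatim, and the extra requirement ``$\ptt_1=\ptt_2=0$ when $\Hgot=\sum_{i=3}^\nsc(\partial_z X_i)^2$'' holds because with this choice of $\Hgot$ that equality always holds, and the hypothesis of the present corollary guarantees $\ptt_1=\ptt_2=0$ whenever $\sum_{i=3}^\nsc(\partial_z X_i)^2=0$; but if this sum is \emph{not} identically zero we are in case (A) of Theorem \ref{th:main}, where no constraint on $\ptt_1,\ptt_2$ is needed. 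So in either case Corollary \ref{co:harmonic} applies.

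The second step is to observe that the resulting weakly complete harmonic map $Y=(Y_j)_{j=1,\ldots,\nsc}:\Ncal\to\r^\nsc$ has Hopf differential $Q_Y=\Hgot=\sum_{i=3}^\nsc(\partial_z X_i)^2$ by property (III). On the other hand, by property (I) we have $(Y_i)_{i=3,\ldots,\nsc}=X$, hence $\sum_{i=3}^\nsc(\partial_z Y_i)^2=\sum_{i=3}^\nsc(\partial_z X_i)^2=\Hgot$. Writing out $Q_Y=\sum_{j=1}^\nsc(\partial_z Y_j)^2$ and cancelling the common terms yields $(\partial_z Y_1)^2+(\partial_z Y_2)^2=0$, i.e.\ $Q_Y=0$ would be the wrong conclusion; rather what we get is that the \emph{first two} coordinates contribute nothing to the Hopf differential. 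To actually kill $Q_Y$ I instead need $\Hgot=0$ is not what we want; let me re-read: we want $Q_Y=0$. So the correct choice is to \emph{not} prescribe $\Hgot$ as above but rather exploit Remark \ref{rem:Q=0}: a harmonic immersion with $Q_Y=0$ and $\Gamma_Y$ nowhere zero is a conformal minimal immersion. Thus the right move is to apply Corollary \ref{co:harmonic} with $\Hgot:=\sum_{i=3}^\nsc(\partial_z X_i)^2$, so that by the computation above $(\partial_z Y_1)^2+(\partial_z Y_2)^2=Q_Y-\sum_{i=3}^\nsc(\partial_z Y_i)^2=\Hgot-\Hgot=0$ — wait, that gives $(\partial_z Y_1)^2+(\partial_z Y_2)^2=0$, and since $Q_Y=\Hgot=\sum_{i=3}^\nsc(\partial_z Y_i)^2$ we do \emph{not} get $Q_Y=0$ unless $\Hgot=0$. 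The resolution: one should prescribe $\Hgot:=0$? No — then $\sum_{i=3}^\nsc(\partial_z X_i)^2$ need not vanish. The actual mechanism, as in the proof of Corollary \ref{co:harmonic} itself, is that $\Theta=\Hgot-\sum_{i=3}^\nsc(\partial_z X_i)^2$ is the object fed to Theorem \ref{th:main}, and the new coordinates satisfy $\psi_1^2+\psi_2^2=\Theta$; choosing $\Hgot=0$ forces $\Theta=-\sum_{i=3}^\nsc(\partial_z X_i)^2$ and then $Q_Y=\psi_1^2+\psi_2^2+\sum_{i=3}^\nsc(\partial_z X_i)^2=\Theta+\sum_{i=3}^\nsc(\partial_z X_i)^2=0$. \emph{That} is the correct choice: take $\Hgot=0$ in Corollary \ref{co:harmonic}.

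So the clean argument is: apply Corollary \ref{co:harmonic} with $\Hgot:=0$ and the given $X,\ptt$ (the compatibility hypothesis ``$\ptt_1=\ptt_2=0$ when $\Hgot=\sum_{i=3}^\nsc(\partial_z X_i)^2$'' becomes ``$\ptt_1=\ptt_2=0$ when $\sum_{i=3}^\nsc(\partial_z X_i)^2=0$'', which is exactly the hypothesis of the present corollary). This produces a weakly complete harmonic map $Y:\Ncal\to\r^\nsc$ with $(Y_i)_{i=3,\ldots,\nsc}=X$, $\ptt_Y=\ptt$, and $Q_Y=0$, and moreover $Y$ is an immersion if $X$ is and full if $X$ is. It remains only to upgrade ``weakly complete harmonic immersion with $Q_Y=0$'' to ``complete conformal minimal immersion'': since $X$ is non-constant, so is $Y$; since $Y$ is an immersion, $\Gamma_Y$ never vanishes; then Remark \ref{rem:Q=0} says $Y$ is a conformal minimal immersion with $\Gamma_Y$ equal to the induced metric and $\ptt_Y$ equal to its flux map; and for conformal minimal immersions weak completeness is equivalent to Riemannian completeness (stated in Section \ref{sec:minimal}), so $Y$ is complete. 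Finally, one must check $Y$ is an immersion even when $X$ is merely a non-constant harmonic map rather than an immersion — here the point is that $\psi_1$ (equivalently the auxiliary $\eta$, chosen nowhere zero on $K$ and then with controlled zeros via (T5)) never vanishes, so $|\psi_1|^2+|\psi_2|^2>0$ everywhere, whence $\Gamma_Y>0$ and $Y$ is automatically an immersion regardless of $X$; this also yields the completeness via (T4). The only mild subtlety — and the step I would flag as needing a line of care — is confirming that the ``$\Hgot=0$'' branch is legitimate, i.e.\ that $\Theta=-\sum_{i=3}^\nsc(\partial_z X_i)^2\not\equiv 0$ precisely when $X$ fails to have this sum vanish, and that in the remaining degenerate case ($\Theta\equiv 0$) the hypothesis $\ptt_1=\ptt_2=0$ places us legitimately in case (B) of Theorem \ref{th:main}; both are immediate from the statement of Corollary \ref{co:harmonic}.
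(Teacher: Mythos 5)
After some visible false starts, your final argument is correct and is exactly the paper's proof: apply Corollary \ref{co:harmonic} with $\Hgot=0$ (so $\Theta=-\sum_{i=3}^{\nsc}(\partial_z X_i)^2$ and $Q_Y=0$), then invoke Remark \ref{rem:Q=0} together with the nowhere-vanishing of $|\psi_1|^2+|\psi_2|^2$ coming from the construction, and the equivalence of weak and Riemannian completeness for conformal minimal immersions. Your closing remark that $Y$ is an immersion even when $X$ is not (via the choice of a nowhere-zero $\phi_1$ on $K$ and property (T5)) is precisely the point the paper leaves implicit, so the two proofs coincide.
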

\begin{proof}
Apply Corollary \ref{co:harmonic} for $\Hgot= 0$ and see Remark \ref{rem:Q=0}.
\end{proof}


\begin{corollary}\label{co:CM}
Let $\Ncal$ be a bounded planar domain. Then there exists a complete non-proper holomorphic embedding of $\Ncal$ in $\c^2.$
\end{corollary}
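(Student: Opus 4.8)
The plan is to derive Corollary \ref{co:CM} directly from Corollary \ref{co:minimal} (equivalently, from Theorem \ref{th:main}, case (B)), by choosing the prescribed data so that the resulting minimal immersion is in fact a holomorphic embedding into $\c^2$. Since $\Ncal$ is a bounded planar domain, it sits in $\c$, so we have at our disposal the coordinate function $z:\Ncal\to\c$, which is a bounded holomorphic function. Write $z=u+\imath v$ with $u,v:\Ncal\to\r$ harmonic, and set $h=(u,v)=(X_3,X_4):\Ncal\to\r^2$. This is a non-constant harmonic map, in fact the real and imaginary parts of the holomorphic (indeed embedded) map $z$. I would then apply Corollary \ref{co:minimal} with $\nsc=4$ to this $h$, choosing the flux morphism $\ptt$ compatibly (for instance taking $\ptt_3,\ptt_4$ to be $\mathrm{Im}\int_\gamma\partial_z X_i$ and $\ptt_1=\ptt_2=0$; note that since $z$ is holomorphic, $\sum_{i=3}^4(\partial_z X_i)^2=(\partial_z u)^2+(\partial_z v)^2$ need not vanish, but in any case we can arrange the compatibility hypothesis of Corollary \ref{co:minimal}). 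The output is a complete conformal minimal immersion $Y=(Y_1,Y_2,Y_3,Y_4):\Ncal\to\r^4$ with $(Y_3,Y_4)=(u,v)$, hence $Y_3+\imath Y_4=z$.

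The next step is to upgrade this to a holomorphic embedding into $\c^2$. The key observation is that in case (B) of Theorem \ref{th:main} one has $\psi_2=\beta\psi_1$ with $\beta\in\{\imath,-\imath\}$, so the first two coordinates of $Y$ are conjugate harmonic: $Y_1+\imath Y_2$ (up to sign/orientation of $\beta$) is a holomorphic function $F_1:\Ncal\to\c$. Concretely, $\partial_z Y_1=\psi_1$ and $\partial_z Y_2=\beta\psi_1$, so $Y_1,Y_2$ are real and imaginary parts of the holomorphic primitive $F_1=\int(\psi_1-\imath\psi_2)$ (choosing $\beta$ so this works out), using that (T3) forces the relevant periods to be real. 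Likewise $F_2:=Y_3+\imath Y_4=z$ is holomorphic. Thus $(F_1,F_2):\Ncal\to\c^2$ is a holomorphic map. It is an immersion because $Y$ is a conformal minimal immersion, so $\partial_z Y$ never vanishes, and in fact $\partial_z F_2=\partial_z z=1$ is already nowhere zero. It is injective because its second component $F_2=z$ is already injective on $\Ncal$. Hence $(F_1,F_2)$ is a holomorphic embedding of $\Ncal$ into $\c^2$.

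Completeness is inherited from the minimal immersion: the induced metric of $Y$ is $\frac12\sum_{j=1}^4|\partial_z Y_j|^2=\frac12(|\psi_1|^2+|\psi_2|^2)+\frac12(|\partial_z u|^2+|\partial_z v|^2)$, which is complete by (T4). The metric induced on $\Ncal$ by the holomorphic embedding $(F_1,F_2)$ is $|\partial_z F_1|^2+|\partial_z F_2|^2=\tfrac12(|\psi_1-\imath\psi_2|^2)+|\partial_z z|^2$; since $\psi_2=\beta\psi_1$ this equals $|\psi_1|^2+|\psi_2|^2+1\ge$ a constant multiple of the complete metric $\sigma_{(\Psi,\Ical)}^2$ appearing in (T4) (here $\Ical=|\partial_z u|^2+|\partial_z v|^2=2|\partial_z z|^2$, which is bounded since $z$ is), so the embedding is complete. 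Finally, non-properness is automatic and requires no extra argument: $\Ncal$ is a bounded domain in $\c$, so the embedding $P\mapsto(F_1(P),z(P))$ has bounded second coordinate $z(P)$, while $F_1$ is not necessarily bounded — but even just from completeness, a proper map from $\Ncal$ into $\c^2$ whose second coordinate is the bounded map $z$ is impossible: properness would force $|F_1|\to\infty$ along every divergent sequence, which is consistent, so instead we argue that $\Ncal$ being an \emph{incomplete} domain (bounded, non-compact) cannot admit a \emph{proper} complete embedding into $\c^2$ unless\ldots actually the cleanest route is: if the embedding were proper, its image would be a closed complete submanifold of $\c^2$, but the image is contained in $\c\times z(\Ncal)\subset\c\times\overline{\Ncal}$, a set whose closure meets $\c\times\partial\Ncal$; combined with completeness one derives a contradiction exactly as in the $\r^3$ case treated in \cite{AFL,AF}. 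The main obstacle is precisely this last point — checking rigorously that the constructed complete embedding fails to be proper — but it follows the well-understood pattern for complete bounded (or partially bounded) minimal/holomorphic objects, and the boundedness of the $z$-coordinate makes it essentially immediate.
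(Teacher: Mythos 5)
Your construction is essentially the paper's: apply Corollary \ref{co:minimal} with $\nsc=4$ and $\ptt=0$ to $h=(\mathrm{Re}\,z,\mathrm{Im}\,z)$, observe that the first two coordinates of the resulting $Y$ are (anti-)conjugate harmonics so that $Y$ is holomorphic as a map into $\c^2$, get injectivity (hence the embedding property) from the second factor $z$, and completeness from the corollary. One slip along the way: since $z$ is holomorphic, $(\partial_z X_3)^2+(\partial_z X_4)^2=\partial_z z\cdot\partial_z\bar z= 0$ \emph{identically}, contrary to your aside that it ``need not vanish''; this is harmless (it is exactly what forces $\ptt_1=\ptt_2=0$ and puts the construction in case (B), on which your holomorphicity step relies), but your stated justification of the compatibility hypothesis is backwards.

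The genuine gap is non-properness, which you explicitly leave open (``the main obstacle is precisely this last point''). Neither of your attempts works: non-properness is not ``automatic'', and completeness is the wrong tool. Indeed, $\c\times\{0\}\subset\c\times\d$ is a complete complex curve contained in $\c\times\Ncal$ (for $\Ncal=\d$) that is properly embedded in $\c^2$, so ``complete $+$ contained in $\c\times\overline{\Ncal}$'' does not by itself contradict properness, and there is no $\r^3$-style argument to import here. What does the work is that the second coordinate of your embedding is the identity $z$ on $\Ncal$: if $(F_1,z)$ were proper in $\c^2$, then, since $z$ is bounded, every divergent sequence in $\Ncal$ would have $|F_1|\to\infty$, i.e.\ the holomorphic function $F_1=Y_1\pm\imath Y_2$ would be a proper map $\Ncal\to\c$. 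A nonconstant proper holomorphic map onto $\c$ is a finite branched covering, which would force $\Ncal$ to be parabolic, contradicting that a bounded planar domain is hyperbolic. This is precisely the paper's one-line argument; the missing idea in your proposal is converting properness of the embedding into properness of the single holomorphic function $F_1$ and then invoking hyperbolicity of $\Ncal$ rather than completeness.
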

\begin{proof}
Consider $X=(X_3,X_4):\Ncal\to\r^2\equiv \c$ given by $X(z)=z.$ Let $Y=(Y_j)_{j=1,\ldots,4}:\Ncal\to\r^4$ be an immersion obtained from Corollary \ref{co:minimal} applied to the data $\Ncal,$ $X$ and $\ptt= 0.$ Since $X$ is injective, $Y$ is an embedding. Finally, observe that $Y$ is non-proper. Indeed, otherwise the holomorphic function $Y_1+\imath Y_2$ would be proper on $\Ncal,$ contradicting that  $\Ncal$ is hyperbolic.
\end{proof}


\begin{corollary}\label{co:omite}
Let $\ptt:\Hcal_1(\Ncal,\z)\to\r^\nsc$ be a group homomorphism.

Then there exists a conformal complete minimal immersion $Y:\Ncal\to\r^\nsc$ satisfying
\begin{itemize}
\item $\ptt_Y=\ptt,$
\item $Y$ is non-decomposable and full,
\item $G_Y$ is non-degenerate, and
\item $G_Y$ fails to intersect $\nsc$ hyperplanes of $\cp^{\nsc-1}$ in general position.
\end{itemize}
\end{corollary}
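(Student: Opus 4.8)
The plan is to derive Corollary~\ref{co:omite} from Theorem~\ref{th:main}, not from Corollary~\ref{co:minimal}, so as to keep control of the auxiliary data on the initial patch; we feed Theorem~\ref{th:main} carefully chosen input. Fix the nowhere‑zero $\tau_0\in\Omega_0(\Ncal)$. First I would prescribe the last $\nsc-2$ coordinates of $Y$ through holomorphic $1$‑forms $\omega_3,\dots,\omega_\nsc$ with $\int_\gamma\omega_i=\imath\,\ptt_i(\gamma)$ for all $\gamma\in\Hcal_1(\Ncal,\z)$, so that $X_i:={\rm Re}\int\omega_i$ is a well‑defined (non‑constant) harmonic function on $\Ncal$ with ${\rm Im}\int_\gamma\partial_z X_i=\ptt_i(\gamma)$. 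Grouping the coordinates of $\r^\nsc$ in pairs $(1,2),(3,4),\dots$, I would arrange: (i) each $\omega_{2l-1}^2+\omega_{2l}^2$ ($l\ge2$) never vanishes on $\Ncal$ — equivalently each of $\omega_{2l-1}\pm\imath\omega_{2l}$ never vanishes, which is easy since $\omega_{2l-1}\pm\imath\omega_{2l}$ are two freely prescribable holomorphic $1$‑forms; (ii) $\Theta:=-\sum_{i=3}^\nsc\omega_i^2\in\mho_0(\Ncal)$ never vanishes on $\Ncal$ — e.g.\ by forcing $\omega_{2l-1}^2+\omega_{2l}^2=c_l\tau_0^2$ with constants $c_l$ no nonempty subcollection of which sums to zero (when $\nsc$ is odd one also needs $\omega_\nsc$ nowhere zero, which is handled similarly after adjusting the $c_l$); (iii) $\omega_3,\dots,\omega_\nsc$ are $\c$‑linearly independent and, moreover, ``generic'' in the sense that no $\c$‑linear relation holds among the products $\omega_i\omega_j$ beyond the ones forced by (ii). Such forms exist by the flexibility of open Riemann surfaces (realizing prescribed periods by holomorphic maps into the complex quadric $\{\sum_{j}z_j^2=1\}$, or by explicit elementary constructions from $\tau_0$ using $\cos/\sin$‑type solutions of $p^2+q^2=\text{const}$), and (iii) is generic.

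Next, on a Runge closed disc $K\subset\Ncal$ I would pick a nowhere‑zero $\eta\in\Omega_0(K)$ with $\eta^2\neq\pm\Theta$ and $\eta^2/\Theta$ non‑constant on $K$, and set $\phi_1:=\tfrac12(\eta+\Theta/\eta)$, $\phi_2:=\tfrac{\imath}2(\eta-\Theta/\eta)\in\Omega_0(K)$; then $\phi_1^2+\phi_2^2=\Theta|_K$, $\phi_1\mp\imath\phi_2\in\{\eta,\Theta/\eta\}$ never vanish on $K$, $\phi_1,\phi_2$ are linearly independent, and $\int_\gamma\Phi$ is vacuously $\imath(\ptt_1,\ptt_2)(\gamma)$ on $\Hcal_1(K,\z)=0$. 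Applying Theorem~\ref{th:main}, case~(A), to $M=K$, $\Ical=\sum_{i=3}^\nsc|\omega_i|^2$, the above $\Theta$, $\Phi=(\phi_1,\phi_2)$ and $\ftt=\imath(\ptt_1,\ptt_2)$, with $\epsilon>0$ small, yields $\Psi=(\psi_1,\psi_2)\in\Omega_0(\Ncal)^2$, and I put $Y=({\rm Re}\int_{P_0}\psi_1,\ {\rm Re}\int_{P_0}\psi_2,\ X_3,\dots,X_\nsc)$. By (T2) and (ii), $Q_Y=\Theta+\sum_{i\ge3}\omega_i^2=0$, so $Y$ is a conformal minimal map; by (T3), ${\rm Re}\int_\gamma\psi_k=0$ (so $Y$ is well defined) and $\ptt_Y=\ptt$; by (T4) and Remark~\ref{rem:Q=0}, $Y$ is a complete immersion (indeed $|\psi_1|^2+|\psi_2|^2\ge\tfrac12|\psi_1-\imath\psi_2|^2>0$). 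Crucially, since $\Theta=(\psi_1-\imath\psi_2)(\psi_1+\imath\psi_2)$ never vanishes, neither factor does; together with (i) this shows $G_Y$ avoids the $\nsc$ hyperplanes $\{w_1\pm\imath w_2=0\}$ and $\{w_{2l-1}\pm\imath w_{2l}=0\}$ ($l\ge2$) — when $\nsc$ is odd, replace the last slot by $\{w_\nsc=0\}$ — whose defining functionals are block‑diagonal, hence $\c$‑linearly independent, i.e.\ in general position.

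It remains to see that $Y$ is full, non‑decomposable and $G_Y$ non‑degenerate. Using that $(\partial_z Y)$ is null, $Y$ is decomposable iff some real orthogonal projection $A\neq 0,\mathrm{Id}$ satisfies $(\partial_z Y)^{\top}A\,(\partial_z Y)\equiv 0$; the space $\mathcal S$ of symmetric $A$ with this property contains $\mathrm{Id}$ (conformality) and the diagonal matrices produced by (ii), but the subspace they span contains no idempotent other than $0$ and $\mathrm{Id}$ precisely because no subcollection of the $c_l$ sums to zero, and $\mathcal S$ reduces to that subspace by the genericity in (iii) together with genericity of $\eta$ (a linear relation among holomorphic products on the connected $\Ncal$ is detected on $K$). ``$\dim\mathcal S$ minimal'' is an open condition on $\Phi|_K$, so by (T1) with $\epsilon$ small it persists for $\Psi|_K$; hence $Y$ is non‑decomposable, and in particular no null relation $\sum_j a_j\psi_j\equiv 0$ holds, so $G_Y$ is non‑degenerate and $Y$ is full. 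The main obstacle is the first step: building $\omega_3,\dots,\omega_\nsc$ that simultaneously realize the prescribed periods, keep the pairwise sums of squares (and $\Theta$, and $\omega_\nsc$ when $\nsc$ is odd) nowhere vanishing, and impose no spurious relations among their products — for $\nsc=3,4$ this is elementary, but for larger $\nsc$ it is the technical heart and is where the Oka‑type flexibility of holomorphic maps from $\Ncal$ into complex quadrics (or a direct construction) must be used; the genericity/openness bookkeeping behind non‑decomposability, and the fact that $\epsilon$ is fixed only after $K$, $\eta$ and the $\omega_i$ are chosen, are the remaining points to verify.
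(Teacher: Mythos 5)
There is a genuine gap, and you flag it yourself: the whole construction hinges on producing $\omega_3,\dots,\omega_\nsc\in\Omega_0(\Ncal)$ that simultaneously (i) have prescribed periods $\int_\gamma\omega_i=\imath\,\ptt_i(\gamma)$, (ii) satisfy $\omega_{2l-1}^2+\omega_{2l}^2=c_l\tau_0^2$ (and $\omega_\nsc$ nowhere zero in the odd case), and (iii) are ``generic''. Requirement (i)+(ii) for a single pair is precisely the statement that one can prescribe the periods of a holomorphic map into the quadric $\{w_1^2+w_2^2=\Theta\}$ with $\Theta$ nowhere zero --- i.e.\ it is exactly what Theorem \ref{th:main} (minus completeness) is for, and it is not obtainable by ``elementary $\cos/\sin$ constructions'' once $\Hcal_1(\Ncal,\z)\neq 0$; the appeal to Oka-type flexibility is outside the paper's toolkit and is left unargued. (Even the single-form case, $\omega_\nsc$ nowhere vanishing with prescribed periods, is a nontrivial result, quoted in the paper as Claim \ref{cla:noceros} from \cite{AFL}.) So your plan, once the gap is filled honestly, collapses into the paper's proof: there, Theorem \ref{th:main} is applied once per coordinate pair, with a fixed nowhere-zero $\phi$ (chosen with $\int_\gamma\phi=\imath\ptt_\nsc(\gamma)$ when $\nsc$ is odd), $\Theta=\lambda_j^2\phi^2$, $\ftt=\imath(\ptt_{2j-1},\ptt_{2j})$ and $\sum_j\lambda_j^2=0$ (even case) or $-1$ (odd case); this yields all the periods, conformality, completeness via $\Ical=|\phi|^2$, and the $\nsc$ omitted hyperplanes $\{w_{2j-1}\pm\imath w_{2j}=0\}$ (plus $\{w_\nsc=0\}$ when $\nsc$ is odd) at once.

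A secondary weakness is your treatment of non-decomposability, fullness and non-degeneracy. Because in your scheme the coordinates $3,\dots,\nsc$ are frozen globally before Theorem \ref{th:main} is invoked, a bad choice of the $\omega_i$ (e.g.\ $\omega_5=2\omega_3$, $\omega_6=2\omega_4$) makes $G_Y$ degenerate no matter what $\psi_1,\psi_2$ are; so the ``no spurious relations among products'' condition must be secured inside the already-unproven construction step, and your idempotent/dimension-of-$\mathcal S$ genericity argument is asserted rather than proved. The paper avoids all of this: it first writes down explicit local data $\Phi_1,\dots,\Phi_{\nsc/2}$ (resp.\ together with $\phi$) on a compact disc $M$ whose associated local minimal immersion $X$ is already non-decomposable, full and has non-degenerate Gauss map --- trivial to arrange on a disc, where the only constraint is $\phi_{j,1}^2+\phi_{j,2}^2=\lambda_j^2\phi^2|_M$ --- and then observes that these are open conditions preserved under the (T1)-approximation for $\epsilon$ small. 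Your single-application architecture with $\Theta=-\sum_{i\ge3}\omega_i^2$, the use of (T2)--(T4) for conformality, periods, completeness and the omitted hyperplanes $\{w_1\pm\imath w_2=0\}$ is fine as far as it goes, but as written the proof is incomplete at its technical heart.
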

\begin{proof}
We need the following
\begin{claim}[\text{\cite[Theorem 4.2]{AFL}}]\label{cla:noceros}
For any group homomorphism $\hat{\ptt}:\Hcal_1(\Ncal,\z)\to\r$ there exists a never vanishing $\phi\in\Omega_0(\Ncal)$ with $\int_\gamma\phi=\imath \hat{\ptt}(\gamma),$ $\forall \gamma \in \Hcal_1(\Ncal,\z).$
\end{claim}

Assume first that $\nsc$ is even.

Consider a nowhere zero $\phi\in\Omega_0(\Ncal)$ (see Claim
\ref{cla:noceros}) and a compact disc $M\subset\Ncal.$ Fix $P_0\in
M^\circ$ and take $\lambda_j\in \c-\{0\}$ and $\Phi_j=(\phi_{j,1},\phi_{j,2})\in
\Omega_0(M)^2,$ $j=1,\ldots, \nsc/2,$ so that
\begin{itemize}
\item $\sum_{j=1}^{\nsc/2} \lambda_j^2=0,$
\item  $\phi_{j,1}$ and $\phi_{j,2}$ are linearly independent in $\Omega_0(M)$ and  $\phi_{j,1}^2+\phi_{j,2}^2=\lambda_j^2\phi^2|_M,$ $\forall j=1,\ldots, \nsc/2,$
\item the minimal immersion $X:M\to\r^\nsc,$  $X(P)={\rm
Re}(\int_{P_0}^P (\Phi_j)_{j=1,\ldots,\nsc/2})$ is non-decomposable and full, and
\item $G_X$ is non-degenerate.
\end{itemize}
Write $\ptt=(\ptt_k)_{k=1,\ldots,\nsc},$ and for any $j=1,\ldots,\nsc/2$
consider $\Psi_j=(\psi_{j,1},\psi_{j,2})\in\Omega_0(\Ncal)^2$ given
by Theorem \ref{th:main}, case (A), applied to the data
\[
\Ncal,\quad M,\quad \Ical=|\phi|^2,\quad \ftt=\imath(\ptt_{2j-1},\ptt_{2j}),\quad \Theta=\lambda_j^2\phi^2,\quad
\Phi=\Phi_j,
\]
and $\epsilon>0$ which will be specified later. Define
\[
Y:\Ncal\to\r^\nsc,\quad Y(P)={\rm Re}\left(\int_{P_0}^P (\Psi_j)_{j=1,\ldots,\nsc/2}\right).
\]

Statement (T3) in Theorem \ref{th:main} gives that $Y$ is well
defined. From (T2) follows that $\sum_{j=1}^{\nsc/2}
(\psi_{j,1}^2+\psi_{j,2}^2)= 0,$ and so $Y$ is conformal.
Moreover, $\sum_{j=1}^{\nsc/2} (|\psi_{j,1}|^2+|\psi_{j,2}|^2)\geq
|\psi_{1,1}|^2+|\psi_{1,2}|^2\geq
\frac1{|\lambda_1|^2+1}(|\psi_{1,1}|^2+|\psi_{1,2}|^2+|\phi|^2)$ that is a complete Riemannian metric on $\Ncal$ (take
into account (T4)).
Therefore, $Y$ is a complete conformal minimal immersion. Item (T3)
implies that $\ptt_Y=\ptt.$ Since $X$ is non-decomposable and full
and $G_X$ is non-degenerate, then  $Y$ and $G_Y$ are, provided that $\epsilon$
is chosen small enough (see (T1)). Finally, observe that $\psi_{j,1}^2+\psi_{j,2}^2$ never
vanishes on $\Ncal$ for all $j=1,\ldots,\nsc/2,$ hence $G_Y$ fails
to intersect the hyperplanes
\[
\Pi_{j,\delta}:=\left\{ [(w_k)_{k=1,\ldots,\nsc}]\in\cp^{\nsc-1}\;|\;
w_{2j-1}+(-1)^\delta\imath w_{2j}=0 \right\},\quad \forall
(j,\delta)\in\{1,\ldots,\nsc/2\}\times\{0,1\},
\]
which are located in general position.

Assume now that $\nsc$ is odd.

Write $\ptt=(\ptt_k)_{k=1,\ldots,\nsc}$ and consider a nowhere zero
$\phi\in\Omega_0(\Ncal)$ with $\int_\gamma\phi=\imath
\ptt_\nsc(\gamma),$ $\forall\gamma\in\Hcal_1(\Ncal,\z)$ (see Claim
\ref{cla:noceros}). Fix a compact disc $M\subset\Ncal$ and a point
$P_0\in M^\circ.$ Take $\lambda_j\in\c-\{0\}$ and 
$\Phi_j=(\phi_{j,1},\phi_{j,2})\in\Omega_0(M)^2,$ $j=1,\ldots,(\nsc-1)/2$ so that:
\begin{itemize}
\item $\sum_{j=1}^{(\nsc-1)/2}\lambda_j^2=-1,$
\item $\phi_{j,1}$ and $\phi_{j,2}$ are linearly independent in $\Omega_0(M)$ and  $\phi_{j,1}^2+\phi_{j,2}^2=\lambda_j^2\phi^2|_M,$ $\forall j=1,\ldots, (\nsc-1)/2,$
\item the minimal immersion $X:M\to\r^\nsc,$  $X(P)={\rm
Re}(\int_{P_0}^P ((\Phi_j)_{j=1,\ldots,(\nsc-1)/2},\phi)$ is non-decomposable and full, and 
\item $G_X$ is non-degenerate.
\end{itemize}

For any $j=1,\ldots,(\nsc-1)/2$ consider
$\Psi_j=(\psi_{j,1},\psi_{j,2})\in\Omega_0(\Ncal)^2$ given by
Theorem \ref{th:main}, case (A), applied to the data
\[
\Ncal,\quad M,\quad \Ical=|\phi|^2,\quad \ftt=\imath(\ptt_{2j-1},\ptt_{2j}),\quad \Theta=\lambda_j^2\phi^2,\quad
\Phi=\Phi_j,
\]
and $\epsilon>0$ which will be specified later.

As above
\[
Y:\Ncal\to\r^\nsc,\quad Y(P)={\rm Re}\left(\int_{P_0}^P
((\Psi_j)_{j=1,\ldots,(\nsc-1)/2},\phi)\right)
\]
is the immersion we are looking for, provided that $\epsilon$ is small enough. In this
case $G_Y$ fails to intersect the following hyperplanes of
$\cp^{\nsc-1}$ located in general position:
\[
\Pi_{j,\delta}:=\left\{ [(w_k)_{k=1,\ldots,\nsc}]\in\cp^{\nsc-1}\;|\;
w_{2j-1}+(-1)^\delta\imath w_{2j}=0 \right\},
\]
$\forall (j,\delta)\in\{1,\ldots,(\nsc-1)/2\}\times\{0,1\},$ and
\[
\Pi:=\left\{ [(w_k)_{k=1,\ldots,\nsc}]\in\cp^{\nsc-1}\;|\; w_\nsc=0
\right\}.
\]

The proof is done.
\end{proof}


\end{document}